\documentclass[12pt]{amsart}
\usepackage{amsfonts, amssymb, amsgen, amsbsy, amstext, amsopn, amsmath,
latexsym, indentfirst,color,longtable,paralist}
\usepackage{txfonts}
\usepackage{enumitem}
\usepackage{centernot}
\usepackage[utf8]{inputenc}
\usepackage{verbatim} 

%
%
\usepackage{hyperref} 

\usepackage[american]{babel}

\usepackage{fge} 

\setlength{\textheight}{8.2in}
\setlength{\topmargin}{.21in}
\setlength{\oddsidemargin}{.15in}
\setlength{\evensidemargin}{.15in}
\setlength{\textwidth}{6.12in}



\newcommand{\B}{\mathbb{B}}

\newcommand{\N}{\mathbb{N}}

\newcommand{\Z}{\mathbb{Z}}


\newcommand{\cC}{\mathcal{C}}
\newcommand{\cD}{\mathcal{D}}
\newcommand{\cE}{\mathcal{E}}
\newcommand{\cF}{\mathcal{F}}
\newcommand{\cG}{\mathcal{G}}
\newcommand{\cH}{\mathcal{H}}

\newcommand{\cP}{\mathcal{P}}

\newcommand{\cS}{\mathcal{S}}

\newcommand{\cT}{\mathcal{T}}


\newcommand{\cd}{\mathfrak{d}}

\newcommand{\cs}{\mathfrak s}


\newcommand{\ep}{\epsilon}

\newcommand{\sm}{\setminus}




\newcommand{\ls}{\mbox{\large $($}}
\newcommand{\rs}{\mbox{\large $)$}}


\newcommand{\diams}{\mbox{\rm diam}\;\!}
\newcommand{\diam}{\mbox{\rm diam}}




\newcommand{\res}{\mbox{\LARGE{$\llcorner$}}}

\newcommand{\eR}{{\overline {\mathbb R}}}





\newcommand{\beqas}{\begin{eqnarray*}}
	\newcommand{\eeqas}{\end{eqnarray*}}
\newcommand{\beqa}{\begin{eqnarray}}
	\newcommand{\eeqa}{\end{eqnarray}}
\newcommand{\beq}{\begin{equation}}
	\newcommand{\eeq}{\end{equation}}
\newcommand{\bce}{\begin{center}}
	\newcommand{\ece}{\end{center}}

\newcommand{\set}[1]{\left\{ #1 \right\}}            






\newtheorem{The}{Theorem}[section]
\newtheorem{Lem}[The]{Lemma}
\newtheorem{Def}{Definition}
\newtheorem{Rem}{Remark}
\newtheorem{Pro}[The]{Proposition}
\newtheorem{Cor}[The]{Corollary}
\newtheorem{Exa}{Example}
\newtheorem{Con}{Conjecure}

\newcommand{\bt}{\begin{The}}
	\newcommand{\et}{\end{The}}
\newcommand{\bl}{\begin{Lem}}
	\newcommand{\el}{\end{Lem}}
\newcommand{\bd}{\begin{Def}\rm}
	\newcommand{\ed}{\end{Def}}
\newcommand{\br}{\begin{Rem}\rm}
	\newcommand{\er}{\end{Rem}}
\newcommand{\bpr}{\begin{Pro}}
	\newcommand{\epr}{\end{Pro}}
\newcommand{\bc}{\begin{Cor}}
	\newcommand{\ec}{\end{Cor}}
\newcommand{\bj}{\begin{Con}}
	\newcommand{\ej}{\end{Con}}
\newcommand{\bex}{\begin{Exa}}
	\newcommand{\eex}{\end{Exa}}

\newcommand{\Sr}{\mathcal{S}_{\mu,\zeta}}
\renewcommand{\sm}{\mathbin{\fgebackslash}}
\newcommand{\mysetminus}{\mathbin{\fgebackslash}}

\newcommand{\psir}{\psi_{\zeta}}
\newcommand{\RADot}{\mbox{\LARGE{$\llcorner$}}}

\numberwithin{equation}{section}

\begin{document}

\title
[A study of measure-theoretic area formulas]
{{\bf A study of measure-theoretic area formulas}}

\author{Giacomo Maria Leccese}
\address{Giacomo Maria Leccese, 
	SISSA, Via Bonomea 265, 34136 Trieste, Italy}
\email{giacomomaria.leccese@sissa.it}
\author{Valentino Magnani}
\address{Valentino Magnani, Dip.to di Matematica, Universit\`a di Pisa \\
Largo Bruno Pontecorvo 5 \\ I-56127, Pisa}
\email{valentino.magnani@unipi.it}

\date{\today}
\thanks{The author acknowledges the support of the University of Pisa, Project PRA 2018 49.}
\subjclass[2020]{Primary 28A15. Secondary 28A78.}
\keywords{Differentiation of measures, spherical measure, Hausdorff measure, Carathéodory's contruction, area formula}

\begin{abstract}
	We present a complete study of measure-theoretic area formulas in metric spaces, providing different measurability conditions.
\end{abstract}

\maketitle

\tableofcontents

\section{Introduction}
 The problem of finding a proper notion of surface area has a long history in Calculus of Variations and in Geometric Measure Theory. Hausdorff measures, or more generally Carath\'eodory measures, are widely accepted as natural notions of surface area in metric spaces, \cite[$\!\mathsection\!$ 2.10.1]{Federer69}. On the other side, several important results of Geometric Measure Theory in Euclidean spaces are based on the fundamental concept of {\em rectifiable set}, whose Hausdorff measure can be computed by the
classical area formula. Rectifiable sets can be considered in metric spaces, \cite[$\!\mathsection\!$ 3.2.14]{Federer69}, and in relation to their density properties, a general metric area formula was established in \cite{Kir94}, see also \cite{AmbKir2000Rect} for a different approach.

Substantial obstructions appear when we wish to compute the Hausdorff measure (or the spherical measure) of {\em purely unrectifiable sets}. An important family of such sets naturally appears in the geometries of homogeneous groups, \cite{FS82}, where also smooth submanifolds can be purely unrecti\-fiable. This is a consequence of their Hausdorff dimension, that can be strictly greater than their topological dimension, \cite[$\!\mathsection\!$ 0.6.B]{Gromov1996}.  In this framework, the use of {\em measure-theoretic area formulas} seems to be the only approach to find integral formulas for the Hausdorff and spherical measure of submanifolds. In this connection, we mention some recent papers \cite{FSSC15,JNGV20prArea,Mag31,Magnani2019Area,MST18}.

If the spherical measure is replaced by the centered Hausdorff measure, additional measure-theoretic area formulas have been proved in \cite{FSSC15}, leading to the relationship between perimeter measure and centered Hausdorff measure in stratified groups. In sum, applications to area formulas for submanifolds and to the perimeter measure in classes of homogeneous groups represent the first motivation of the present work. However, taking into account the deep relationship between density and rectifiability, \cite{Preiss87,PreissTiser92}, there are also interesting connections with the so-called Besicovitch $\frac12$-problem and rectifiability, discussed in \cite{Mag30,Mag17Dens}.
Considering that measure-theoretic area formulas are stated in general metric spaces, we would not be surprised about further applications.

We continue the study of measure-theoretic area formulas, according to which a suitable 
integration formula can be found between a Borel measure  and a Carathéodory measure, \cite{Mag30}. The latter measure $\psi_\zeta$ is obtained by a gauge $\zeta:\cS\to[0,+\infty]$
defined on a class of subsets $\cS\subset\cP(X)$ of a metric space $X$.
When $\zeta$ is proportional to some real power $\alpha>0$ of the diameter of a set, the Carathéodory measure coincides with the Hausdorff measure $\cH^\alpha$. In analogous way we obtain the spherical measure $\cS^\alpha$ (Definition~\ref{d:sizephi}).

The general measure-theoretic area formula \cite[Theorem~9]{Mag30} essentially arises from two key results: \cite[Theorem~2.10.17(2)]{Federer69} and \cite[Theorem~2.10.18(1)]{Federer69}. On the other hand, in considering a general $\zeta$, it may happen that 
the quotients $\mu(S)/\zeta(S)$ in the upper covering limits 
are not well defined for some $S\in\cS$ of possibly small diameter.
The values of $\zeta(S)$ may either vanish or be equal to $+\infty$ and the same values may be taken by $\mu(S)$. Another point in \cite[Theorem~2.10.18(1)]{Federer69} is that the gauge $\zeta$ need not be necessarily defined on the enlargement $\hat S$ of $S$, see \eqref{d:tau-enlargement}.

Revised versions of \cite[Theorem~2.10.17(2)]{Federer69} and \cite[Theorem~2.10.18(1)]{Federer69}
are stated in \cite{Mag30} without proof. Actually, all the results of \cite{Mag30} are only stated. The previous two theorems can be extended to our Lemmas~\ref{lemma minor} and \ref{lemma major}, hence leading in turn to two more general measure-theoretic area formulas, proved in Theorems~\ref{the:meastheoarea} and \ref{theo-mea area form}. All of these results provide a reasonably complete picture of measure-theoretic area formulas for Carathéodory measures in metric spaces, giving indeed the second motivation for this work.

To define the Federer density, we introduce the quotient function $Q_{\mu,\zeta}$ with respect to a measure $\mu$ over $X$ and a gauge $\zeta$, (Definition~\ref{def:FedererDensity}). We restrict $Q_{\mu,\zeta}$ to
a {\em subclass $\Sr\subset\cS$} where it is unambiguously defined. It is somehow surprising the fact that, despite this restriction, the Federer density $F^\zeta(\mu,x)$, defined in \eqref{eq federer den} by $Q_{\mu,\zeta}$, still yields the following measure-theoretic area formula
\begin{equation}\label{eq:mestheoarea-intro}
	\mu(B)=\int_B F^\zeta(\mu,x)\,d\psi_\zeta(x).
\end{equation}
The cases where $\psi_\zeta$ is either the Hausdorff measure or the spherical measure is important for applications, where the Federer density has also more chances to be explicitly found.
This fact has been shown for submanifolds or intrinsic submanifolds in different classes of noncommutative homogeneous groups, \cite{FSSC15,JNGV20prArea,Mag31,Magnani2019Area,MST18}.

The Federer density $F^\zeta(\mu,\cdot)$ is denoted by $\cd^\alpha(\mu,\cdot)$ for the Hausdorff measure and by $\cs^\alpha(\mu,\cdot)$ for the spherical measure. In these cases it is also important to show that $\cd^\alpha(\mu,\cdot)$ and $\cs^\alpha(\mu,\cdot)$ are measurable (or Borel), but this is not straightforward. We mention that when the metric space is a homogeneous group and a suitable restriction of $\psi_\zeta$ is assumed to be locally doubling, the measurability of $\cd^\alpha(\mu,\cdot)$ and $\cs^\alpha(\mu,\cdot)$ is proved in \cite[Proposition~2.3]{JNGV20prArea}.
It is possible to show that in a metric space the Federer density $\cd^\alpha(\mu,\cdot)$ is Borel with respect to the subspace topology of a fixed set, according to Theorem~\ref{th:Hmeasurability}.
By a mild continuity assumption on the diameter function (Definition~\ref{diam reg}) 
and some more work, the previous Borel measurability can be extended to $\cs^\alpha(\mu,\cdot)$, see Theorem~\ref{th:spher Borel}. 
Important consequences are the versions of \eqref{eq:mestheoarea-intro} for the Hausdorff and spherical measure, established in Theorems~\ref{area formula haus Borel}, \ref{area formula haus}, \ref{th:area formula sph Borel} and \ref{area formula spher meas}.

\section{Basic notions}

Throughout the paper, $X$ denotes a metric space equipped with a distance $d$.
A {\em covering relation} is a subset $\cC$ of $\{(x,S): x\in S\in\cP(X)\}$. 
Defining for $A\subset X$ the class  
\[
\cC(A)=\{S: x\in A,\,(x,S)\in\cC\},
\] 
we say that $\cC$ is {\em fine at $x$}, if for every $\ep>0$ there exists $S\in\cC(\{x\})$ such that $\diams S<\ep$.
Following the terminology of \cite[$\!\mathsection\!$ 2.8.1]{Federer69}, a nonempty family of sets
$\cF\subset\cP(X)$ {\em covers $A\subset X$ finely}, if for each $a\in A$
and $\ep>0$ there exists $S\in\cF$ with $a\in S\in \cF$ 
such that $\diam(S)<\ep$.
According to \cite[$\!\mathsection\!$ 2.8.16]{Federer69}, the notion of covering relation yields the following notion of ``upper'' and ``lower covering limits''.
%
%
%
%
%
\begin{Def}[Covering limits]\label{def:coveringlimits} \rm
	If $\cC$ is a covering relation which is fine at $x\in X$, 
	\[\cC(\{x\})\subset \cD\subset\cC(X)\]
	 and $f:\cD\to\eR$, then we define the {\em upper and lower covering limit}, respectively as
	\begin{eqnarray}
		&&(\cC)\limsup_{S\to x}f=\inf_{\ep>0}\sup\{f(S): S\in\cC(\{x\}), \diams S<\ep\}\,,\\ 
		&&(\cC)\liminf_{S\to x}f=\sup_{\ep>0}\inf\{f(S): S\in\cC(\{x\}), \diams S<\ep\}\,.
	\end{eqnarray}
\end{Def}
The closed ball and the open ball of center $x\in X$ and radius $r>0$ are denoted by 
	\[ 
	\B(x,r)=\{y\in X: d(x,y)\le r\}\quad\mbox{and}\quad B(x,r)=\{y\in X: d(x,y)< r\}\,,
	\]
respectively. The notion of $\delta$-covering and the Carath\'eodory construction of 
\cite[$\!\mathsection\!$ 2.10.1]{Federer69} are both recalled in the next definition. 

%
%
%
%
\begin{Def}\label{d:sizephi}\rm 
If $\delta>0$, $E\subset X$ and $\cE=\set{E_j:j\in\N}\subset\cP(X)$ is a covering of $E$
such that $\diam(E_j)\le \delta$ for every $j\in\N$, we say that $\cE$ is a 
{\em $\delta$-covering} of $E$.
Let $\cS\subset\cP(X)$ and let $\zeta:\cS\to[0,+\infty]$ be 
a fixed {\em gauge}.
For $R\subset X$, we define
\begin{eqnarray*}
		&&\phi_{\zeta,\delta}(R)=\inf \left\lbrace\sum_{j=0}^\infty \zeta(E_j):
		\set{E_j:j\in\N}\subset\cS\;\text{is a $\delta$-covering of $R$} \right\rbrace\,.
\end{eqnarray*}
	The {\em $\zeta$-approximating measure}, or {\em Carath\'eodory measure}, is defined as follows
	\[\psi_\zeta(A)=\sup_{\delta>0}\phi_{\zeta,\delta}(A)\]
for every $A\subset X$. Denoting by $\cF$ the family of closed sets of $X$, for $\alpha,\,c_\alpha>0$, we define
	$\zeta_\alpha:\cF\to[0,+\infty]$ by
	\begin{equation*}
		\zeta_\alpha(S)= c_\alpha\, \diam(S)^\alpha\,.
	\end{equation*}
	Then the {\em $\alpha$-dimensional Hausdorff measure} is $\cH^\alpha=\psi_{\zeta_\alpha}$. 
We define the set function 
\[
\zeta_{b,\alpha}:\cF_b\to[0,+\infty)
\]
as the restriction of $\zeta_\alpha$ to the family of all closed balls in $X$,
that we denote by $\cF_b$.
Then $\psi_{\zeta_{b,\alpha}}$ is the {\em $\alpha$-dimensional spherical Hausdorff measure}, denoted by $\cS^\alpha$.
\end{Def}
We will use the terminology ``measure over $X$" to indicate an outer measure defined on all subsets of $X$, \cite[$\!\mathsection\!$ 2.1.2]{Federer69}.
A gauge $\zeta:\cS\to[0,+\infty]$ is also fixed on a nonempty class $\cS\subset\cP(X)$.
The next definition gives the notion of Federer density, introduced in \cite[Definition~4]{Mag30}.

%
%
%
%
%
\begin{Def}[Federer density]\label{def:FedererDensity}\rm 
Let $\mu$ be a measure over $X$ and let $\zeta:\cS\to[0,+\infty]$
be a gauge. We introduce the subfamily of sets
	\begin{equation}\label{d:Sr}
	\cS_{\mu,\zeta}=\cS\sm \Big\{S\in\cS\,:\; \zeta(S)=\mu(S)=0\; \mbox{\rm or}\; 
	\mu(S)=\zeta(S)=+\infty\, \Big\}\,,
	\end{equation}
	along with the covering relation 
	\[
	\cC_{\mu,\zeta}=\{(x,S): x\in S\in\cS_{\mu,\zeta}\}.
	\]
	We choose $x\in X$ and assume that $\cC_{\mu,\zeta}$ is fine at $x$. We define the quotient function
	\begin{eqnarray}\label{q}
		Q_{\mu,\zeta}:\cS_{\mu,\zeta}\to[0,+\infty],\quad
		Q_{\mu,\zeta}(S)=\left\{\begin{array}{ll} +\infty & \mbox{if $\zeta(S)=0$} \\ 
			\mu(S)/\zeta(S) & \mbox{if $0<\zeta(S)<+\infty$} \\
			0 & \mbox{if $\zeta(S)=+\infty$}
		\end{array}\right.\,.
	\end{eqnarray}
	Then the {\em Federer density}, or 
	{\em upper $\zeta$-density of $\mu$ at $x\in X$}, is well defined as
	\begin{equation}\label{eq federer den}
		F^\zeta(\mu,x)=(\cC_{\mu,\zeta})\limsup_{S\to x}Q_{\mu,\zeta}(S)\,.
	\end{equation}
\end{Def}
We use a special notation when we consider Federer densities with respect to $\zeta_\alpha$ and $\zeta_{b,\alpha}$, respectively.
If $\mu$ is a measure over $X$ and $\cC_{\mu,\zeta_{b,\alpha}}$ is fine at $x\in X$, then we set
\[
\cs^\alpha(\mu,x)=F^{\zeta_{b,\alpha}}(\mu,x).
\]
If $\cC_{\mu,\zeta_\alpha}$ is fine at $x$, then we set 
\[
\cd^\alpha(\mu,x)=F^{\zeta_\alpha}(\mu,x).
\]
These densities will appear in the measure-theoretic area formulas
for the Hausdorff and spherical measure, see Section~\ref{sect:HausdSpher}. 
\begin{Rem}\rm It is important to compute the upper limit \eqref{eq federer den} only over the sets $S\in\cC_{\mu,\zeta}$, even if the definition of $Q_{\mu,\zeta}$ in \eqref{q} could be extended to any subset.
Let us consider the gauge $\zeta_\alpha:\cF\to[0,\infty]$, with
$\zeta_\alpha(A)=c_\alpha\diam(A)^\alpha$ for every $A\in\cF$, where $\alpha>0$. 
If we extend the definition of $Q_{\mu,\zeta}$ to all of $\cF$ using \eqref{q},
then for every $x\in X$ it holds
\beq\label{eq:FedDensClosed}
(\cF)\limsup_{S\to x}Q_{\mu,\zeta}(S)=+\infty.
\eeq
Indeed, for every $\epsilon>0$, we have 
$$
\sup\{Q_{\mu,\zeta}(S):S\in \cC(\{x\}),\,\diam (S)<\epsilon\}=Q_{\mu,\zeta}(\{x\})=+\infty.
$$
The upper covering limit \eqref{eq:FedDensClosed} now differs from the Federer density $\cd(\mu,x)$. It cannot give the measure-theoretic area formulas of Theorems~\ref{area formula haus Borel} and \ref{area formula haus}.
\end{Rem}
In the sequel, a number $\tau>1$ is fixed and we will use the notion of 
{\em enlargement of $S$}, namely we introduce the set
\begin{equation}\label{d:tau-enlargement}
	\hat{S}=\bigcup\{T\in\Sr:\,T\cap S\ne\emptyset\;\text{and}\;\diam(T)\le\tau\diam(S)\},
\end{equation}
where $\Sr$ is defined in \eqref{d:Sr}.

\section{Preliminary tools}

As mentioned in the introduction, the notion of Federer's density
based on the quotient function \eqref{q} differs from the 
upper covering limits appearing in \cite[Theorem~2.10.17(2)]{Federer69} and 
\cite[Theorem~2.10.18(1)]{Federer69}. The next lemmas show that
in these theorems the Federer density \eqref{eq federer den} 
can replace the upper covering limits.

We will use the notion of {\em regular measure} taken from \cite[$\!\mathsection\!$ 2.1.5]{Federer69}.

%
%
%
%
%
%
%
%
%
%
%
%
\bl\label{lemma minor}
Let $\zeta:\cS\to[0,+\infty]$ be a gauge and let $A\subset X$ be nonempty.
We consider a measure $\mu$ over $X$, $t>0$ and assume that the following conditions hold.
\begin{enumerate}
	\item $\mu$ is a regular measure.
	\item $\cS_{\mu,\zeta}$ covers $A$ finely.
	\item For every $x\in A$ we have $F^\zeta(\mu,x)<t$.
\end{enumerate}
Then $\mu(E)\le t\psi_\zeta(E)$ for all $E\subset A$. 
\el

\begin{proof}
	Let $B(\mu,t,\delta)$ be the set of all those points $x\in E$ such that whenever $S\in\cS_{\mu,\zeta}$
	and $x\in S$ with $\diam (S)\le\delta$, we have
	\begin{equation*}
		\mu(E\cap S)\le t\zeta(S).
	\end{equation*}
	We claim that $E=\bigcup_{k=0}^\infty B(\mu,t,2^{-k})$. The hypothesis implies that for every fixed $x\in E$, there exists $\bar k\in\mathbb{N}$ such that for every $S\in\cS_{\mu,\zeta}$ containing $x$ and $\diam(S)\le 2^{-\bar k}$, we have 
	\[
	Q_{\mu,\zeta}(S)<t.
	\]
There are two possible cases: $\zeta(S)=+\infty$ or
	$$0<\zeta(S)<+\infty\hspace{1cm}\text{and}\hspace{1cm}\frac{\mu(E\cap S)}{\zeta(S)}\le\frac{\mu(S)}{\zeta(S)}\le t.$$
	In any case $x\in B(\mu,t,2^{-\bar k})$ and the claim is proved. 
	We notice that 
	$$\mu\left(B\left(\mu,t,2^{-k}\right)\right)\le t\phi_{\zeta,2^{-k}}\left(B\left(\mu,t,2^{-k}\right)\right).$$
	Indeed for every family $\left\{E_i\right\}_{i\in\mathbb{N}}\subset\mathcal{S}$ such that $\diam(E_i)\le2^{-k}$ and 
	\[
	B\left(\mu,t,2^{-k}\right)\subset\bigcup_{i\in\mathbb{N}}E_i,
	\]
	we have $$\mu\left(B\left(\mu,t,2^{-k}\right)\right)  \le
	\sum_{i\in\mathbb{N}}\mu\left(B\left(
	\mu,t,2^{-k}\right)\cap E_i\right)   \le
	\sum_{i\in\mathbb{N}}\mu\left(E\cap E_i\right)    \le
	t\sum_{i\in\mathbb{N}}\zeta\left(E_i\right).$$ 
	The last inequality holds in the case $E_i\in \cS_{\mu,\zeta}$ for every $i$, where 
	we use the definition of $B\left(\mu,t,2^{-k}\right)$. It is also true for every $E_i\in\mathcal{S}\sm\cS_{\mu,\zeta}$, since in this case if $\mu\left(E_i\right)=\zeta\left(E_i\right)=+\infty$ the inequality is obvious, otherwise $\mu\left(E\cap E_i\right)\le\mu\left( E_i\right)=\zeta\left(E_i\right)=0$.
	It follows that
	\begin{equation*}
		\begin{split}
			\mu(E)&=\lim_{k\to\infty}\mu\left(B\left(\mu,t,2^{-k}\right)\right)\le t\limsup_{k\to\infty}\phi_{\zeta,2^{-k}}\left(B\left(\mu,t,2^{-k}\right)\right)\\
			&\le t\limsup_{k\to\infty}\psi_\zeta(B(\mu,t,2^{-k}))\le
			t\psi_\zeta(E),
		\end{split}
	\end{equation*}
	where in the first equality we have used the assumption that $\mu$ is a regular measure.
\end{proof}

%
%
%

%
%
%
%
%
%
%
%
%
%
%
%
%
%

An aspect to be clarified in \cite[Theorem~2.10.18(1)]{Federer69}
is that the gauge $\zeta$ in this theorem may not be necessarily defined on 
the enlargement $\hat S$.
In the next lemma, the conditions \eqref{eq:StildeConditions} take into account this fact
and the upper covering limit in \cite[Theorem~2.10.18(1)]{Federer69} is replaced by the Federer density.

\bl\label{lemma major}
	Let $\mu$ be a measure over $X$, $B\subset X$ is nonempty and 
	let $\zeta:\cS\to[0,+\infty]$ be a gauge. 
	We assume that the following conditions hold.
	\begin{enumerate}
		\item $\Sr$ is made by closed and $\mu$-measurable sets.
		\item $\Sr$ covers $B$ finely.
		\item We have $c\ge1$ and $\eta>0$ such that for every $S\in \Sr$
		there exists $\tilde S\in\cS$ such that
		\begin{equation}\label{eq:StildeConditions}
			\hat S\subset\tilde S,\hspace{1cm}\diam(\tilde S)\le c\, \diam (S)\hspace{1cm}and\hspace{1cm}\zeta(\tilde S)\le\eta\zeta(S),
		\end{equation}
	where $\hat S$ is the enlargement of $S$ defined in \eqref{d:tau-enlargement}.
	\item Let $t>0$ be such that  
		\[F^\zeta(\mu,x)>t\]
		for all $x\in B$. 
	\end{enumerate}
	Then for any open set $V$ containing $B$, we have $t\psi_\zeta(B)\le\mu(V)$.
\el

\begin{proof}
	It is obviously not restrictive to assume $\mu(V)<+\infty$. For each $\delta>0$, 
	we define
		$$\cT_{\delta}=\left\{S\in \Sr:t\zeta(S)<\mu(S),\,S\subset V\;\text{and}\;\text{diam}( S)\le\frac{\delta}{c}\right\}.$$
	We claim that $\cT_\delta$ covers $B$ finely.
	We fix $x\in B$. Since $F^\zeta(\mu,x)>t$ and $V$ is open, there exists $S\in \Sr$ such that $S\subset V$, 
	\[
	\diam(S)<\frac{\delta}{c}\quad\text{and}\quad Q_{\mu,\zeta}(S)>t.
	\]
	Since $\zeta(S)<+\infty$, we have $\zeta(S)=0$ and $\mu(S)>0$, or
$\zeta(S)>0$ and $Q_{\mu,\zeta}(S)=\mu(S)/\zeta(S)$. In both cases we have proved that $t\zeta(S)<\mu(S)$, so that the claim is established.
In view of \cite[Corollary~2.8.6]{Federer69}, there exists a disjoint subfamily 
$\mathcal {G}$ of $\cT_\delta$ such that 
	\begin{equation*}
		B\mysetminus\bigcup \mathcal {H}\subset\bigcup\left\{\hat S:S\in \mathcal {G}\mysetminus \mathcal {H}\right\}
	\end{equation*}
for every finite class $\mathcal {H}\subset \mathcal {G}$. Since $\cup \mathcal {G}\subset V$ and $\mu(V)<+\infty$, the class $\cG$ must be countable.
Thus, given $\epsilon>0$, we can choose $\mathcal {H_\epsilon}\subset\cG$ so that
\begin{equation*}
		\sum_{S\in \mathcal {G}\mysetminus \mathcal {H_\epsilon}} \mu(S)<\epsilon.
\end{equation*}
We now use the hypothesis (3), choosing some $\tilde S\in\cS$ such that
conditions \eqref{eq:StildeConditions} hold for every $S\in \cG\sm\cH_\epsilon$.
Taking into account that $\mathcal {H}_\epsilon\cup\left\{\tilde S\subset X:S\in\mathcal{G}\mysetminus\mathcal{H}_\epsilon\right\}$ is still a $\delta$-cover of $B$, we have
	\begin{equation*}
		\phi_{\zeta,\delta}(B)\le \sum_{S\in \mathcal {H}_\epsilon}\zeta(S)+\sum_{S\in \mathcal {G}\mysetminus \mathcal {H}_\epsilon} \zeta(\tilde S)<t^{-1}\sum_{S\in \mathcal {H}_\epsilon}\mu(S)+\eta t^{-1}\epsilon\le t^{-1}[\mu(V)+\eta\epsilon].
	\end{equation*}
Since $\epsilon>0$ is independent of $\delta$, we let $\ep\to0^+$,
proving that $t\phi_{\zeta,\delta}(B)\le \mu(V)$. The arbitrary choice of $\delta$ concludes the proof.
\end{proof}

The previous lemmas imply the following characterization of the absolutely continuity
between two measures.

\bpr\label{remark abs cont}
Let $\mu$ be a measure over $X$, let $A\subset X$ be nonempty and 
consider a gauge $\zeta:\cS\to[0,+\infty]$.
We assume that the following conditions hold.
\begin{enumerate}
	\item $\mu$ is a regular measure.
	\item 
Every element of $\Sr$ is closed and $\mu$-measurable. 
\item
$\Sr$ covers $A$ finely. 
\item\label{musigmafinite}
$A$ has a countable covering whose elements are open and have $\mu$-finite measure.
\item
We have $c\ge1$ and $\eta>0$ such that for every $S\in \Sr$
there exists $\tilde S\in\cS$ such that
\begin{equation*}
	\hat S\subset\tilde S,\hspace{1cm}\diam(\tilde S)\le c\, \diam (S)\hspace{1cm}and\hspace{1cm}\zeta(\tilde S)\le\eta\zeta(S),
\end{equation*}
where $\hat S$ is the enlargement of $S$ defined in \eqref{d:tau-enlargement}.
\end{enumerate}
Then the condition $\mu{\RADot A}<<{\psir}{\RADot A}$  is equivalent to
\begin{equation}\label{un}
	\mu\left(\left\{x\in A:F^\zeta(\mu,x)=+\infty\right\}\right)=0.
\end{equation}
\epr
\begin{proof}
	We first suppose that $\mu{\RADot A}<<{\psir}{\RADot A}$. Let $\left\{V_k\right\}_{k\in\mathbb{N}}$ be a countable covering of $A$, whose elements are open and have $\mu$-finite measure. By Lemma \ref{lemma major},
	for every $t>0$ we have
	\begin{equation*}
		t\psir\left(\left\{x\in A:F^\zeta(\mu,x)=+\infty\right\}\cap V_k\right)
		\le\mu(V_k)<+\infty ,
	\end{equation*}
	so that $\psir\left(\{x\in A:F^\zeta(\mu,x)=+\infty\}\cap V_k\right)=0$ for every $k\in\mathbb{N}$, hence $$\psir\left(\left\{x\in A:F^\zeta(\mu,x)=+\infty\right\}\right)=0.$$ By our hypothesis, the equality (\ref{un}) immediately follows.
	
	Conversely, we assume that (\ref{un}) holds and consider $B\subset A$ with $\psir(B)=0$. We set 
	$$ B_{j}=\left\{x\in B:F^\zeta(\mu,x)<j\right\}$$
	and observe that 
	$B=\bigcup_{j\ge1}B_{j}\cup\left\{x\in B:F^\zeta(\mu,x)=+\infty\right\}$.
	By \eqref{un}, it follows that 
	$$\mu(B)\le\sum _{j\ge1}\mu(B_{j}).$$ Finally, we can apply Lemma~\ref{lemma minor} to obtain that  
	$$\mu(B_{j})\le j\psi_\zeta(B_{j})\le j\psi_\zeta(B)=0$$
	for all $j\ge1$, hence completing the proof.
\end{proof}

\begin{Rem}\rm
It is important to observe that in Proposition~\ref{remark abs cont} the absolute continuity between the two measures follows from \eqref{un} 
without assuming condition \eqref{musigmafinite}.
\end{Rem}	

\section{Measure-theoretic area formulas}

This section is devoted to the proof of the measure-theoretic area formula under
two different groups of assumptions. 
Combining Lemma~\ref{lemma minor}, Lemma~\ref{lemma major} and Proposition~\ref{remark abs cont}, we get a first version of the measure-theoretic area formula.
%
%
%
%
%
%
%
%
%
\begin{The}[Measure-theoretic area formula I]\label{the:meastheoarea}
	Let $\mu$ be a measure over $X$ and consider a gauge $\zeta:\cS\to[0,+\infty]$.
	We fix a nonempty set $A\subset X$ and assume that 
	the following conditions hold.
	\begin{enumerate}
		\item 
		$\mu$ is both a regular measure and a Borel measure.
		\item
		Every element of $\Sr$ is closed.
		\item
		$\cS_{\mu,\zeta}$ covers $A$ finely.
		\item
		$A$ is a Borel set.
		\item\label{coveringmufiniteA}
		$A$ has a countable covering whose elements are open and have $\mu$-finite measure.
		\item\label{ceta-estim}
		We have $c\ge1$ and $\eta>0$ such that for every $S\in \Sr$
		there exists $\tilde S\in\cS$ such that
		\begin{equation*}
			\hat S\subset\tilde S,\hspace{1cm}\diam(\tilde S)\le c\, \diam (S)\hspace{1cm}and\hspace{1cm}\zeta(\tilde S)\le\eta\zeta(S),
		\end{equation*}
		where $\hat S$ is the enlargement of $S$ defined in \eqref{d:tau-enlargement}.
		\item
		The subset $\lbrace x\in A: F^\zeta(\mu,x)=0\rbrace$ is
		$\sigma$-finite with respect to $\psi_\zeta$.
		\item
	We have the absolute continuity $\mu\res A<<\psi_\zeta\res A$.
	\end{enumerate}
	If $F^\zeta(\mu,\cdot):A\to[0,+\infty]$ is Borel, then for every Borel set $B\subset A$ we have
	\begin{equation}\label{eq:mestheoarea}
		\mu(B)=\int_B F^\zeta(\mu,x)\,d\psi_\zeta(x)\,.
	\end{equation}
\end{The}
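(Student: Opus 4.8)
The plan is to prove the identity $\mu(B)=\int_B F^\zeta(\mu,x)\,d\psi_\zeta(x)$ by a standard layer-cake decomposition of $B$ according to the size of the Federer density, using Lemma~\ref{lemma minor} and Lemma~\ref{lemma major} to sandwich $\mu$ between multiples of $\psi_\zeta$ on each layer. Concretely, fix $\lambda>1$ and for $k\in\Z$ set
\[
B_k=\{x\in B: \lambda^k\le F^\zeta(\mu,x)<\lambda^{k+1}\},
\]
together with $B_0^\infty=\{x\in B: F^\zeta(\mu,x)=0\}$ and $B_\infty=\{x\in B: F^\zeta(\mu,x)=+\infty\}$. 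Since $F^\zeta(\mu,\cdot)$ is Borel and $B$ is a Borel set, each of these is a Borel set, and $B$ is their disjoint union. By hypothesis (8), $\mu\res A\ll\psi_\zeta\res A$, and Proposition~\ref{remark abs cont} gives $\mu(B_\infty)=0$; moreover on $B_\infty$ the integrand is $+\infty$ but the set is $\psi_\zeta$-null (again by Proposition~\ref{remark abs cont}, since $\psi_\zeta(B_\infty)=0$), so that set contributes $0$ to both sides. On $B_0^\infty$ the integrand vanishes, so the right-hand side contributes $0$; by hypothesis (7) this set is $\sigma$-finite for $\psi_\zeta$, so writing it as an increasing union of $\psi_\zeta$-finite pieces and applying Lemma~\ref{lemma minor} with arbitrarily small $t$ shows $\mu(B_0^\infty)=0$ as well. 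Hence only the layers $B_k$, $k\in\Z$, matter.

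On each layer $B_k$ I would apply the two lemmas. For the upper bound: every $x\in B_k$ satisfies $F^\zeta(\mu,x)<\lambda^{k+1}$, so Lemma~\ref{lemma minor} (whose hypotheses (1)--(3) hold: $\mu$ is regular, $\cS_{\mu,\zeta}$ covers $A\supset B_k$ finely, and the density bound holds on $B_k$) gives $\mu(E)\le\lambda^{k+1}\psi_\zeta(E)$ for every $E\subset B_k$; in particular $\mu(B_k)\le\lambda^{k+1}\psi_\zeta(B_k)$. For the lower bound: every $x\in B_k$ satisfies $F^\zeta(\mu,x)\ge\lambda^k>t$ for any $t<\lambda^k$, so Lemma~\ref{lemma major} (whose hypotheses (1)--(4) hold: $\Sr$ is closed and $\mu$-measurable—here I use that $\mu$ is Borel and the elements of $\Sr$ are closed—$\Sr$ covers $B_k$ finely by (3) since $\cS_{\mu,\zeta}\subset\cS$ and actually $\Sr=\cS_{\mu,\zeta}$, the enlargement condition (6) holds, and the density bound holds on $B_k$) yields $t\psi_\zeta(B_k)\le\mu(V)$ for every open $V\supset B_k$. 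Taking the infimum over open $V\supset B_k$ and using that $\mu$ is a regular Borel measure, so that $\mu(B_k)=\inf\{\mu(V): V\supset B_k\ \text{open}\}$ on the $\mu$-finite portion of $B_k$ (which we may reduce to by intersecting with the countable cover from (5)), gives $t\psi_\zeta(B_k)\le\mu(B_k)$, hence $\lambda^k\psi_\zeta(B_k)\le\mu(B_k)$ after letting $t\uparrow\lambda^k$.

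Combining the two bounds, on each layer
\[
\lambda^k\psi_\zeta(B_k)\le\mu(B_k)\le\lambda^{k+1}\psi_\zeta(B_k),
\]
and simultaneously
\[
\lambda^k\psi_\zeta(B_k)\le\int_{B_k}F^\zeta(\mu,x)\,d\psi_\zeta(x)\le\lambda^{k+1}\psi_\zeta(B_k),
\]
directly from the definition of $B_k$. Therefore $\mu(B_k)$ and $\int_{B_k}F^\zeta(\mu,\cdot)\,d\psi_\zeta$ both lie in the interval $[\lambda^k\psi_\zeta(B_k),\lambda^{k+1}\psi_\zeta(B_k)]$, whence
\[
\lambda^{-1}\int_{B_k}F^\zeta(\mu,x)\,d\psi_\zeta(x)\le\mu(B_k)\le\lambda\int_{B_k}F^\zeta(\mu,x)\,d\psi_\zeta(x).
\]
Summing over $k\in\Z$ and adding the null contributions from $B_\infty$ and $B_0^\infty$ gives
\[
\lambda^{-1}\int_B F^\zeta(\mu,x)\,d\psi_\zeta(x)\le\mu(B)\le\lambda\int_B F^\zeta(\mu,x)\,d\psi_\zeta(x);
\]
here one must be slightly careful that the sum of the $\mu(B_k)$ equals $\mu(B)$ (countable additivity of $\mu$ on the disjoint Borel sets $B_k$, valid since $\mu$ is a Borel measure) and likewise for the integral (countable additivity of the integral over disjoint Borel sets). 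Finally, letting $\lambda\to1^+$ yields \eqref{eq:mestheoarea}.

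The main obstacle I anticipate is the lower bound on each layer, specifically converting the conclusion $t\psi_\zeta(B_k)\le\mu(V)$ of Lemma~\ref{lemma major}, valid for all open $V\supset B_k$, into $t\psi_\zeta(B_k)\le\mu(B_k)$. This requires outer regularity of $\mu$ on $B_k$, which is where hypotheses (1) (regular Borel measure), (4) ($A$ Borel, hence $B_k$ Borel), and especially (5) (the countable open cover of $\mu$-finite sets) must be used carefully: one should first intersect with each piece $V_j$ of that cover, obtain the estimate on $B_k\cap V_j$ where $\mu$ is finite and the infimum of $\mu(V)$ over open $V\supset B_k\cap V_j$ equals $\mu(B_k\cap V_j)$, and then let $j$ range. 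A secondary subtlety is the bookkeeping when $\psi_\zeta(B_k)=+\infty$ for some $k$: then both $\mu(B_k)\ge\lambda^k\psi_\zeta(B_k)=+\infty$ and $\int_{B_k}F^\zeta(\mu,\cdot)\,d\psi_\zeta=+\infty$, so the equality still holds trivially on that layer, but one should dispatch this case explicitly rather than manipulate the interval $[\lambda^k\psi_\zeta(B_k),\lambda^{k+1}\psi_\zeta(B_k)]$.
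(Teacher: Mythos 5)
Your proposal is correct and follows essentially the same route as the paper's proof: the same layer decomposition of $B$ according to the size of $F^\zeta(\mu,\cdot)$, with Lemma~\ref{lemma minor} giving the upper bound on each layer, Lemma~\ref{lemma major} combined with outer approximation by open sets (via hypotheses (1), (4) and (5)) giving the lower bound, and $\lambda\to1$ at the end. The only loose point is your attribution of $\psi_\zeta(B_\infty)=0$ to Proposition~\ref{remark abs cont}, whose \emph{statement} only yields $\mu(B_\infty)=0$; the paper instead derives $\psi_\zeta$-nullity of that layer directly by choosing, for each $h>0$, an open set $W_h\supset B_\infty$ with $\mu(W_h)<h$ and applying Lemma~\ref{lemma major} with $t=1$, and that is the one-line argument you should substitute there (it is the same computation that appears inside the proof of the Proposition).
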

\begin{proof}
Let $A^0=\{x\in A:F^\zeta(\mu,x)=0\}$ be covered by a countable
family $\{E_k\}_{k\in\N}$ of subsets such that $\psi_\zeta(E_k)<+\infty$ 
for every $k\in\N$. For $h>0$, applying Lemma~\ref{lemma minor}, we get
$$
\mu(A^0\cap E_k)\le h\psi_\zeta(A^0\cap E_k)\le h\psi_\zeta(E_k)<+\infty.
$$
Letting $h\to0^+$, we conclude that $\mu(A^0)=0$.
We choose a Borel set $B\subset A$ and an arbitrary $\lambda\in(0,1)$,
then we define
	$$B_{k}=\{x\in B:\lambda^{k+1}<F^\zeta(\mu,x)\le\lambda^k\}$$
for every $k\in\Z$, with $B_{+\infty}=\{x\in B:F^\zeta(\mu,x)=0\}$
	and $B_{-\infty}=\{x\in B:F^\zeta(\mu,x)=+\infty\}$.
These are all Borel sets. 
In particular, we have shown that 
replacing $B$ with $B_{+\infty}$ \eqref{eq:mestheoarea} holds.
Proposition~\ref{remark abs cont} shows that 
$\mu(\{x\in A:F^\zeta(\mu,x)=+\infty\})=0,$ therefore
$\mu(B_{-\infty})=0$.
Since $\mu$ is Borel and assumption (5) holds, for an arbitrary $h>0$
we can find an open set $W_h\supset B_{-\infty}$ such that
$\mu(W_h\sm B_{-\infty})=\mu(W_h)<h$. We apply Lemma~\ref{lemma major} to $B_{-\infty}$ with $t=1$, getting 
\[
h>\mu(W_h)>\psi_\zeta(B_{-\infty}).
\]
We conclude that $\psi_\zeta(B_{-\infty})=0$ and \eqref{eq:mestheoarea} 
also holds with $B_{-\infty}$ in place of $B$.
We are left to prove that \eqref{eq:mestheoarea} holds with 
$B$ replaced by $B'=B\sm(B_{+\infty}\cup B_{-\infty})$.
Using Lemma \ref{lemma minor}, for every $\epsilon>0$, we get
	\begin{equation*}
		\begin{split}
			\mu(B')=\sum_{k\in\mathbb{Z}}\mu(B_{k})&\le\sum_{k\in\mathbb{Z}}(1+\epsilon)\lambda^k\psi_\zeta(B_{k})\\
			&=(1+\epsilon)\lambda^{-1}\sum_{k\in\mathbb{Z}}\lambda^{k+1}\psi_\zeta(B_{k})\\
			&\le(1+\epsilon)\lambda^{-1}\int_{B'} F^\zeta(\mu,x)d\psi_\zeta(x).
		\end{split}
	\end{equation*}
Taking into account the arbitrary choice of $\epsilon$ and $\lambda$, we
have proved that
$$\mu(B')\le\int_{B'} F^\zeta(\mu,x)d\psi_\zeta(x).$$
To prove the opposite inequality, we consider the nontrivial case where
$\mu(B')<+\infty$. Taking into account that 
$\mu$ is Borel and the assumption \eqref{coveringmufiniteA} holds,
for every $\epsilon>0$ and every $k\in\mathbb{Z}$ there exists an open set $W_k\supseteq B_k$ such that $\mu(W_k\mysetminus B_k)<\epsilon2^{-|k|}$. Since $\mu(B_k)<+\infty$ for all $k\in\Z$, it follows that
	\begin{equation*}
		\begin{split}
			\mu(B')=&\sum_{k\in\mathbb{Z}}\mu(B_k)
			>\sum_{k\in\mathbb{Z}}\left(\mu(W_k)-\epsilon2^{-|k|}\right)\\
			=& \sum_{k\in\mathbb{Z}}\mu(W_k)-3\epsilon.
		\end{split}
	\end{equation*}
	Using Lemma~\ref{lemma major}, we get
	\begin{equation*}
\sum_{k\in\mathbb{Z}}\mu(W_k)\ge\sum_{k\in\mathbb{Z}}\lambda^{k+1}\psi_\zeta(B_k)\ge\lambda\int_{B'} F^\zeta(\mu,x)d\psi_\zeta(x).
	\end{equation*}
	We have proved that  
	$$\mu(B')\ge\lambda \int_{B'} F^\zeta(\mu,x)d\psi_\zeta(x)-3\epsilon.$$
	The arbitrary choice of $\epsilon$ and $\lambda$ 
	lead us to the opposite inequality
	$$\mu(B')\ge\int_{B'} F^\zeta(\mu,x)d\psi_\zeta(x),$$
	hence concluding the proof.
\end{proof}
\begin{Rem}\rm 
We notice that the use of Proposition~\ref{remark abs cont} 
in the proof of Theorem~\ref{the:meastheoarea} depends on our
choice to present the assumptions. In fact, if we replace the absolute continuity $\mu{\RADot A}<<{\psir}{\RADot A}$ with the condition 
\begin{equation}\label{eq:thetalessinfI}
		\mu\ls\lbrace x\in A: F^\zeta(\mu,x)=+\infty\rbrace\rs=0,
\end{equation}
then Proposition~\ref{remark abs cont} is not necessary in the proof.
\end{Rem}

In the next version of the measure-theoretic area formula, the Borel measurability of
the Federer density is replaced by the more general $\psir$-measurability.
We can also consider a more general $\psir$-measurable domain of integration $A$,
requiring in addition the $\sigma$-finiteness with respect to $\psir$.
%
%
%
%
%
%
%
\begin{The}[Measure-theoretic area formula II]\label{theo-mea area form}
Let $\mu$ be a measure over $X$ and consider a gauge $\zeta:\cS\to[0,+\infty]$.
We fix a nonempty set $A\subset X$ and assume that 
the following conditions hold.
\begin{enumerate}
	\item 
	$\mu$ is a Borel regular measure.
	\item
	Every element of $\cS$ is Borel and every element of $\Sr$ is closed.
	\item
	$\Sr$ covers $A$ finely.
	\item
	$A$ is $\psi_\zeta$-measurable and $\sigma$-finite with respect to $\psir$.
	\item\label{eq:countablefinitemu}
	$A$ has a countable covering whose elements are open and have $\mu$-finite measure.
	\item\label{ceta-estim-II}
		We have $c\ge1$ and $\eta>0$ such that for every $S\in \Sr$
		there exists $\tilde S\in\cS$ such that
		\begin{equation*}
			\hat S\subset\tilde S,\hspace{1cm}\diam(\tilde S)\le c\, \diam (S)\hspace{1cm}and\hspace{1cm}\zeta(\tilde S)\le\eta\zeta(S),
		\end{equation*}
		where $\hat S$ is the enlargement of $S$ defined in \eqref{d:tau-enlargement}.
	\item
	We have the absolute continuity $\mu\res A<<\psi_\zeta\res A$.
\end{enumerate}
If $F^\zeta(\mu,\cdot):A\to[0,+\infty]$ is $\psi_\zeta$-measurable, then every $\psi_\zeta$-measurable subset $B\subset A$
	is also $\mu$-measurable and the following formula holds
	\begin{equation}\label{eq:mestheoareaI}
		\mu(B)=\int_B F^\zeta(\mu,x)\,d\psi_\zeta(x)\,.
	\end{equation}
\end{The}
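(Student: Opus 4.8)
The plan is to follow closely the structure of the proof of Theorem~\ref{the:meastheoarea}, adapting each step to the weaker measurability framework. First I would reduce the problem to three pieces according to the value of the Federer density: the set $B_{+\infty}=\{x\in B:F^\zeta(\mu,x)=0\}$, the set $B_{-\infty}=\{x\in B:F^\zeta(\mu,x)=+\infty\}$, and the ``bulk'' $B'=B\setminus(B_{+\infty}\cup B_{-\infty})$, further decomposed as $B'=\bigcup_{k\in\Z}B_k$ with $B_k=\{x\in B:\lambda^{k+1}<F^\zeta(\mu,x)\le\lambda^k\}$ for a fixed $\lambda\in(0,1)$. Since $F^\zeta(\mu,\cdot)$ is now only $\psi_\zeta$-measurable, these sets are $\psi_\zeta$-measurable rather than Borel; the first order of business is therefore to check that each is still $\mu$-measurable, which will follow once we establish $\mu{\res A}<<\psi_\zeta{\res A}$ at the level of measurable sets, i.e.\ that $\psi_\zeta$-measurable subsets of $A$ are $\mu$-measurable. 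For this I would use that $A$ is $\sigma$-finite with respect to $\psi_\zeta$ together with assumption (7): a $\psi_\zeta$-null set inside $A$ is $\mu$-null by absolute continuity, and on the $\sigma$-finite set $A$ one can sandwich any $\psi_\zeta$-measurable subset between Borel sets differing by a $\psi_\zeta$-null set, whence $\mu$-measurability; here Borel regularity of $\psi_\zeta$ (which holds for any Carathéodory measure built from a gauge) and of $\mu$ are what make the sandwiching work.

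Next I would handle the two degenerate pieces. For $B_{-\infty}$, Proposition~\ref{remark abs cont} (whose hypotheses are all in force here) gives $\mu(\{x\in A:F^\zeta(\mu,x)=+\infty\})=0$, so $\mu(B_{-\infty})=0$; applying Lemma~\ref{lemma major} to $B_{-\infty}$ with $t=1$ on an open set $V\supset B_{-\infty}$ of arbitrarily small $\mu$-measure (available because $\mu$ is Borel regular, $A$ has a covering by open sets of finite $\mu$-measure, and $\mu(B_{-\infty})=0$) forces $\psi_\zeta(B_{-\infty})=0$, so both sides of \eqref{eq:mestheoareaI} vanish there. For $B_{+\infty}$: here one cannot quote $\sigma$-finiteness of $\{F^\zeta=0\}$ with respect to $\psi_\zeta$ as a hypothesis, but since $A$ itself is $\sigma$-finite with respect to $\psi_\zeta$, so is $B_{+\infty}$, and then Lemma~\ref{lemma minor} applied on each piece of finite $\psi_\zeta$-measure with $t=h\to 0^+$ gives $\mu(B_{+\infty})=0$, while the integrand is identically $0$; so \eqref{eq:mestheoareaI} holds there too.

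For the bulk $B'$, the upper estimate $\mu(B')\le\int_{B'}F^\zeta(\mu,x)\,d\psi_\zeta$ comes from Lemma~\ref{lemma minor} applied to each $B_k$ with $t=(1+\epsilon)\lambda^k$, summing over $k\in\Z$, using $\mu(B')=\sum_k\mu(B_k)$ ($\mu$-measurability of the $B_k$) and $(1+\epsilon)\lambda^k\psi_\zeta(B_k)\le(1+\epsilon)\lambda^{-1}\int_{B_k}F^\zeta\,d\psi_\zeta$, then letting $\epsilon\to0^+$ and $\lambda\to1^-$. The reverse estimate, in the nontrivial case $\mu(B')<+\infty$, uses assumption (5): choose open sets $W_k\supseteq B_k$ with $\mu(W_k\setminus B_k)<\epsilon 2^{-|k|}$, so that $\sum_k\mu(W_k)>\mu(B')-3\epsilon$; then Lemma~\ref{lemma major} applied to each $B_k$ with $t=\lambda^{k+1}$ gives $\mu(W_k)\ge\lambda^{k+1}\psi_\zeta(B_k)$, and summing yields $\mu(B')\ge\lambda\int_{B'}F^\zeta\,d\psi_\zeta-3\epsilon$; letting $\epsilon\to0^+$ and $\lambda\to1^-$ finishes it. Combining the three pieces and additivity of both sides of \eqref{eq:mestheoareaI} over the partition $B=B_{+\infty}\cup B_{-\infty}\cup B'$ completes the proof.

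The main obstacle I expect is the opening measurability step: verifying that every $\psi_\zeta$-measurable $B\subset A$ is $\mu$-measurable, since unlike in Theorem~\ref{the:meastheoarea} (where $A$ and $B$ were assumed Borel and $F^\zeta$ Borel) we can no longer invoke Borel sets directly. This is where the hypothesis that $A$ is $\sigma$-finite with respect to $\psi_\zeta$, combined with Borel regularity of both measures and the absolute continuity (7), has to be used carefully: one reduces to a finite $\psi_\zeta$-measure set, approximates $B$ from outside by a $G_\delta$ (or $\psi_\zeta$-measurable Borel) hull $H$ with $\psi_\zeta(H\setminus B)=0$, notes $\mu(H\setminus B)=0$ by (7), and concludes $B=H\setminus N$ with $N$ a $\mu$-null set, hence $\mu$-measurable. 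Everything else is a faithful transcription of the earlier argument with ``Borel'' systematically replaced by ``$\psi_\zeta$-measurable'' and the $\sigma$-finiteness of $A$ supplying what was previously an explicit hypothesis on $\{F^\zeta=0\}$.
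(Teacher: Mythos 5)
Your proposal is correct and follows essentially the same route as the paper: the same decomposition into $B_{+\infty}$, $B_{-\infty}$ and the dyadic-in-$\lambda$ slices $B_k$, the same use of Proposition~\ref{remark abs cont} and Lemmas~\ref{lemma minor} and \ref{lemma major} on each piece, and the same key opening step of upgrading $\psi_\zeta$-measurability to $\mu$-measurability via $\sigma$-finiteness, Borel regularity of $\psi_\zeta$ (which, as in hypothesis (2), rests on the elements of $\cS$ being Borel rather than being automatic for every Carathéodory construction) and the absolute continuity (7). The only cosmetic difference is that the paper first reduces to $\psi_\zeta(B)<+\infty$ by splitting $A$ into Borel pieces of finite $\psi_\zeta$-measure, while you perform the analogous finite-measure reduction locally when treating $B_{+\infty}$.
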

\begin{proof}
	Let $B\subset A$ be a $\psir$-measurable set.
	It is not restrictive to assume that $\psir(B)<+\infty$. 
	This is justified by the fact that $A$ can be written as a disjoint union $\bigcup_{i\in\mathbb{N}} A_k$, where $A_k$ is $\psir$-measurable and it has $\psir$-finite measure.
Condition (2) implies that $\psir$ is Borel regular, so even if we may have a priori a sequence of arbitrary sets that cover $A$ and have $\psir$-finite measure,
we can always assume they are Borel and in particular $\psir$-measurable.
Our claim for every $B\cap A_k$ implies that this set is $\mu$-measurable for every $k\in\N$ and 
	\begin{equation*}
	\mu(B)=\sum_{k\in\mathbb{N}}\mu(B\cap A_k)=\sum_{k\in\mathbb{N}}\int_{B\cap A_k} F^\zeta(\mu,x)d\psir(x)=\int_B F^\zeta(\mu,x)d\psir(x).
	\end{equation*}
The assumption that $\psi_\zeta(B)<+\infty$ in particular gives
	\begin{equation}\label{uf}
	\psir\left(\left\{x\in B:F^\zeta(\mu,x)=0\right\}\right)<+\infty.
\end{equation} 
Since $\psir$ is Borel regular and $\psir(B)<+\infty$, we can write $B=B_0\cup N\subset A$, where
	$B_0$ is Borel and $\psir(N)=0$. By our assumption, $\mu(N)=0$, hence $B$ is also $\mu$-measurable. Since $A$ is $\sigma$-finite with respect to $\psir$, actually any $\psir$-measurable subset of $A$ is also $\mu$-measurable. 
	
	We fix $0<\lambda<1$ and for every $k\in\mathbb{Z}$ define
	$$B_{k}=\{x\in B:\lambda^{k}<F^\zeta(\mu,x)\le\lambda^{k+1}\},$$
that is $\psir$-measurable, along with
\[
B_{+\infty}=\{x\in B:F^\zeta(\mu,x)=0\}\quad \text{and}\quad 
B_{-\infty}=\{x\in B:F^\zeta(\mu,x)=+\infty\}.
\] 
Thus, all of these sets are also $\mu$-measurable.
Since $\psir(B_{+\infty})<+\infty$, for any fixed $\kappa>0$, due to Lemma~\ref{lemma minor}, we get
$$
\mu(B_{+\infty})\le \kappa \psi_\zeta(B_{+\infty})<+\infty.
$$
The arbitrary choice of $\kappa>0$ gives $\mu(B_{+\infty})=0$.
This proves \eqref{eq:mestheoareaI} for $B=B_{+\infty}$.
In view of Proposition~\ref{remark abs cont}, we get
	\begin{equation*}
		\mu\left(\left\{x\in A:F^\zeta(\mu,x)=+\infty\right\}\right)=0
	\end{equation*}
and in particular $\mu(B_{-\infty})=0$.
We now use the fact that $\mu$ is Borel and assumption (5) holds.
Then for any $h>0$ there exists an open set $W_h\supset B_{-\infty}$ 
such that
$\mu(W_h\sm B_{-\infty})=\mu(W_h)<h$.
By Lemma~\ref{lemma major} with $t=1$, we get 
$h>\mu(W_h)>\psi_\zeta(B_{-\infty})$,
hence $\psi_\zeta(B_{-\infty})=0$ and \eqref{eq:mestheoarea} 
holds with $B=B_{-\infty}$.
We set $B'=B\sm(B_{+\infty}\cup B_{-\infty})$ and observe that
the validity of \eqref{eq:mestheoareaI} for $B=B'$ concludes the proof.
Due to Lemma~\ref{lemma minor}, for $\epsilon>0$
it holds  
	\begin{equation*}
		\begin{split}
			\mu(B')=\sum_{k\in\mathbb{Z}}\mu(B_{k})&\le
			\sum_{k\in\mathbb{Z}}(1+\epsilon)
			\lambda^{k}\psi_\zeta(B_{k})\\
			&\le(1+\epsilon)\lambda^{-1}\sum_{k\in\mathbb{Z}}\lambda^{k+1}\psi_\zeta(B_{k})\\
			&\le(1+\epsilon)\lambda^{-1}\int_{B'} F^\zeta(\mu,x)d\psi_\zeta(x).
		\end{split}
	\end{equation*}
The arbitrary choice of $\epsilon$ and $\lambda$, give the first inequality
	$$\mu(B')\le\int_{B'} F^\zeta(\mu,x)d\psi_\zeta(x).$$
We now prove the opposite inequality, considering the nontrivial case
where $\mu(B')<+\infty$. As before, the Borel regularity of $\mu$ and assumption (5) provide 
an outer approximation with open sets. 
For every $\epsilon>0$ and $k\in\mathbb{Z}$ there exists an open set 
$W_k\supseteq B_k$ such that 
	$$
	\mu(W_k\mysetminus B_k)<\epsilon2^{-|k|}.
	$$
We obtain the following inequalities
\begin{equation*}
\mu(B')=\sum_{k\in\mathbb{Z}}\mu(B_k)
			>\sum_{k\in\mathbb{Z}}\left(\mu(W_k)-\epsilon2^{-|k|}\right)\\
			=\sum_{k\in\mathbb{Z}}\mu(W_k)-3\epsilon.
\end{equation*}
Our assumptions also allow to apply Lemma \ref{lemma major}, hence
\begin{equation*}
\sum_{k\in\mathbb{Z}}\mu(W_k)\ge\sum_{k\in\mathbb{Z}}\lambda^{k+1}\psi_\zeta(B_k)	\ge\lambda\int_{B'} F^\zeta(\mu,x)d\psi_\zeta(x).
\end{equation*}
It follows that  
$$\mu(B')\ge\lambda\int_{B'} F^\zeta(\mu,x)d\psi_\zeta(x)-3\epsilon.$$
The arbitrary choice of $\ep>0$ and $\lambda$ provide 
	the opposite 	inequality
	$$\mu(B')\ge\int_{B'} F^\zeta(\mu,x)d\psi_\zeta(x),$$
	hence completing the proof.
\end{proof}

\begin{Rem}\rm
We notice that in both Theorem~\ref{the:meastheoarea} and Theorem~\ref{theo-mea area form},
assuming that $\mu$ is finite on all open metric balls gives the assumption \eqref{eq:countablefinitemu}.
\end{Rem}

\section{Applications to Hausdorff measures}\label{sect:HausdSpher}

The special cases where $\psir$ is either the Hausdorff measure or the spherical measure 
are important for the applications in Geometric Measure Theory.
The next theorems consider the case where the gauge is proportional to some power of the diameter. 
We prove that some of the ``abstract conditions" for measure-theoretic area formulas are satisfied. 
%
%
%
%
%
%
%
%
\begin{The}\label{th:Hmeasurability}
Let $\mu$ be a measure over $X$, $\alpha>0$ and fix $A\subset X$.
If $\cF_{\mu,\zeta_\alpha}$ covers $A$ finely, then $\cd^\alpha(\mu,\cdot):A\to[0,+\infty]$ is Borel with respect to the subspace topology of $A$. 
\end{The}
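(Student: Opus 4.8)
The plan is to exhibit $\cd^\alpha(\mu,\cdot)=F^{\zeta_\alpha}(\mu,\cdot)$ as a countable infimum of lower semicontinuous functions on $A$, equipped with its subspace topology; Borel measurability then follows immediately. For $\ep>0$ and $x\in A$ set
\[
g_\ep(x)=\sup\bigl\{Q_{\mu,\zeta_\alpha}(S):\ S\in\cF_{\mu,\zeta_\alpha},\ x\in S,\ \diam S<\ep\bigr\}\in[0,+\infty],
\]
the supremum being over a nonempty family because $\cF_{\mu,\zeta_\alpha}$ covers $A$ finely. Since $\ep\mapsto g_\ep(x)$ is nondecreasing, the definition of the upper covering limit (Definition~\ref{def:coveringlimits}) gives
\[
\cd^\alpha(\mu,x)=\inf_{\ep>0}g_\ep(x)=\inf_{n\ge1}g_{1/n}(x),\qquad x\in A .
\]
Hence it suffices to prove that each $g_\ep$ is lower semicontinuous on $A$, i.e.\ that $\{x\in A:g_\ep(x)>t\}$ is relatively open in $A$ for every $t\in\R$.

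Fix $\ep>0$, a real $t\ge0$ (for $t<0$ the set is all of $A$, since $Q_{\mu,\zeta_\alpha}\ge0$) and $x\in A$ with $g_\ep(x)>t$; choose $S_0\in\cF_{\mu,\zeta_\alpha}$ with $x\in S_0$, $\diam S_0<\ep$ and $Q_{\mu,\zeta_\alpha}(S_0)>t$. For $y\in X$ form the closed set $S_y=S_0\cup\{y\}$. Since $x\in S_0$ one has $\diam S_y\le\diam S_0+d(x,y)$, and for $y\neq x$ also $\diam S_y\ge d(x,y)>0$; therefore, as soon as $d(x,y)<\ep-\diam S_0$, the set $S_y$ satisfies $0<\diam S_y<\ep$, so $0<\zeta_\alpha(S_y)<+\infty$ and hence $S_y\in\cF_{\mu,\zeta_\alpha}$ automatically (the only sets removed from $\cF$ to form $\cF_{\mu,\zeta_\alpha}$ have $\zeta_\alpha\in\{0,+\infty\}$). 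By monotonicity of $\mu$,
\[
Q_{\mu,\zeta_\alpha}(S_y)=\frac{\mu(S_y)}{c_\alpha\,\diam(S_y)^\alpha}\ \ge\ \frac{\mu(S_0)}{c_\alpha\bigl(\diam S_0+d(x,y)\bigr)^\alpha}\xrightarrow[\,y\to x\,]{}\frac{\mu(S_0)}{c_\alpha(\diam S_0)^\alpha}.
\]
If $\diam S_0>0$ the limit equals $Q_{\mu,\zeta_\alpha}(S_0)>t$; if $\diam S_0=0$ then $S_0=\{x\}$, necessarily $\mu(\{x\})>0$, and $\mu(\{x\})/(c_\alpha d(x,y)^\alpha)\to+\infty$. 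In either case there is $\rho\in(0,\ep-\diam S_0)$ with $Q_{\mu,\zeta_\alpha}(S_y)>t$ whenever $0<d(x,y)<\rho$, so $g_\ep(y)\ge Q_{\mu,\zeta_\alpha}(S_y)>t$ there, while $g_\ep(x)>t$ by assumption. Thus $B(x,\rho)\cap A\subset\{x'\in A:g_\ep(x')>t\}$, which proves the claimed openness.

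Consequently each $g_{1/n}$ is lower semicontinuous, hence Borel, on $A$, and $\cd^\alpha(\mu,\cdot)=\inf_{n\ge1}g_{1/n}$ is Borel on $A$ as a countable infimum of Borel functions. Note that nothing beyond fineness and the monotonicity of the outer measure $\mu$ is used; no regularity of $\mu$ is needed. The only real content is the lower semicontinuity step, and the mechanism there is the \emph{transport trick}: a set $S_0$ nearly realizing $g_\ep(x)$ is turned into an admissible competitor for a nearby point $y$ simply by adjoining $y$, which enlarges the diameter by at most $d(x,y)$ while not decreasing the $\mu$-measure, so by continuity of $r\mapsto\mu(S_0)/(c_\alpha r^\alpha)$ the quotient stays above $t$. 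The degenerate case $\diam S_0=0$ (i.e.\ $\mu$ has an atom at $x$) must be singled out, but is handled the same way.
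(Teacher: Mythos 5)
Your proof is correct and follows essentially the same route as the paper's: both reduce the statement to the lower semicontinuity on $A$ of the truncated supremum $g_\ep$ (the paper's $\cd^\alpha_\delta(\mu,\cdot)$) and establish openness of its superlevel sets by perturbing a near-optimal competitor $S_0$, treating the atomic case $\diam S_0=0$ separately. The only difference is cosmetic: the paper enlarges $S_0$ to $S_0\cup\B(y,\ep)$, obtaining a single competitor that serves every point of a ball, while you adjoin one point at a time, using $S_0\cup\{y\}$ as a separate competitor for each nearby $y$; both devices work.
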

\begin{proof}
We have to prove that $\cd^\alpha(\mu,\cdot)$ is Borel with respect to the subspace topology of $A$. It suffices to prove that for any $\delta>0$ the function 
$\cd^\alpha_\delta(\mu,\cdot):A\to [0,+\infty]$ defined as
	$$
	\cd^\alpha_\delta(\mu,x)=\sup\{Q_{\mu,\zeta_\alpha}(S):x\in S,\, S\in\cF_{\mu,\zeta_\alpha},\diam(S)<\delta\}
	$$
is Borel with respect to the subspace topology of $A$.
We fix $t>0$ and consider the set 
$$\cS_{t,\delta}=\{x\in A:\,\cd^\alpha_\delta(\mu,x) >t\}.$$
We wish to prove that $\cS_{t,\delta}$ is open in $A$. 
Let us fix $y\in\cS_{t,\delta}$. There exists a set $S\in\cF_{\mu,\zeta_\alpha}$ 
containing $y$, such that
$$
Q_{\mu,\zeta_\alpha}(S)>t\quad\text{and}\quad \diam(S)<\delta.
$$ 
If $\zeta_\alpha(S)=0$, then $S=\{y\}$ and $\mu(\{y\})>0$. 
We have only two possibilities: either there exists $\epsilon>0$ such that $\text{diam}(\mathbb{B}(y,\epsilon))=0$ or we can find 
\[
0<\epsilon<\min\left\{\frac\delta2,\frac 12\left(\frac{\mu(\set{y})}{tc_\alpha}\right)^{1/\alpha}  \right\}
\]
such that $\diam(\B(y,\epsilon))>0$. 
In the former case we get $Q_{\mu,\zeta_\alpha}(\B(y,\ep))=+\infty$ and we can also assume that $\ep<\delta/2$. In the latter, we obtain 
$$
Q_{\mu,\zeta_\alpha}(\B(y,\epsilon))=\frac{\mu(\mathbb{B}(y,\epsilon))}{c_\alpha \text{diam}(\mathbb{B}(y,\epsilon))^\alpha}\ge\frac{\mu(\{y\})}{c_\alpha( 2\epsilon)^\alpha}>t.$$
In both cases $\B(y,\ep)\in\cF_{\mu,\zeta_\alpha}$, $\diam(\B(y,\ep))<\delta$ and  
$$Q_{\mu,\zeta_\alpha}(\mathbb{B}(y,\epsilon))>t.$$
If we choose any $z\in\mathbb{B}(y,\epsilon)$, then the definition of Federer density gives
$$
t<Q_{\mu,\zeta^\alpha}(\mathbb{B}(y,\epsilon))\le\cd^\alpha_\delta(\mu,z).
$$
We have shown that ${B}(y,\epsilon)\cap A\subset \cS_{t,\delta}$.
Now we consider the case $\zeta^\alpha(S)>0$, where 
$$
Q_{\mu,\zeta^\alpha}(S)=\frac{\mu(S)}{\zeta^\alpha(S)}>t
$$ 
and $\diam(S)<\delta$. By the triangle inequality, since $y\in S$, there exists an $\epsilon>0$ such that $$\diam(S\cup\mathbb{B}(y,\epsilon))\le \text{diam}(S)+2\epsilon<\delta.$$ Choosing a possibly smaller $\epsilon>0$, we also get 
$$
\frac{\mu(S)}{c_\alpha\diam(S\cup\mathbb{B}(y,\epsilon))^\alpha}>t,
$$ where 
$S\cup\mathbb{B}(y,\epsilon)$ is still closed and belongs to $\cF_{\mu,\zeta_\alpha}$.
We have proved that $$\frac{\mu(S\cup\mathbb{B}(y,\epsilon))}{\zeta^\alpha(S\cup\mathbb{B}(y,\epsilon))}\ge\frac{\mu(S)}{c_\alpha\diam(S\cup\mathbb{B}(y,\epsilon))^\alpha}>t.$$
For every $z\in{B}(y,\epsilon)\cap A$, we obtain 
$$
t<Q_{\mu,\zeta^\alpha}(S\cup\mathbb{B}(y,\epsilon))\le \cd^\alpha_\delta(\mu,z)
$$
hence 
${B}(y,\epsilon)\cap A\subset \cS_{t,\delta}$.
Thus there exists an open set $\Omega$ such that $\cS_{t,\delta}=\Omega\cap A$
and we may conclude that $\cd^\alpha_\delta(\mu,\cdot)$ is lower semicontinuous on $A$ with
respect to the subspace topology of $A$. Thus, $y\to \cd^\alpha(\mu,y)$ is Borel with respect to the subspace topology of $A$. 
\end{proof}
\begin{Rem}\label{rem:F'}\rm 
	We denote by $\cF''$ the family of closed bounded sets with positive diameter and observe that $\cF''=(\cF'')_{\mu,\zeta_\alpha}$. Then the assumption that $\cF''$ covers $A$ finely implies that $(\cF)_{\mu,\zeta_\alpha}$ covers $A$ finely and from Theorem~\ref{th:Hmeasurability} we can conclude that
$\cd^\alpha(\mu,\cdot)$ is well defined and Borel with respect to the subspace topology of $A$.
    If for every $x\in A$ there exists a sequence of closed bounded sets $F_{x,k}$ (depending on $x$)
    that contain $x$ and have positive diameter for every $k\in\N$, with $\diam(F_{x,k})\to0$, then $\cF''$ covers $A$ finely and so does $(\cF)_{\mu,\zeta_\alpha}$
\end{Rem}
%
%
%
%
%
%
%
%
%
\begin{The}[Area formula for the Hausdorff measure I]\label{area formula haus Borel}
	Let $\mu$ be a measure over $X$, $\alpha>0$ and fix $A\subset X$.
	We assume the validity of the following conditions.
	\begin{enumerate}
		\item 
		$\mu$ is both a regular measure and a Borel measure.
		\item
		$\cF_{\mu,\zeta_\alpha}$ covers $A$ finely.
		\item  
		$A$ is a Borel set.
		\item
		$A$ has a countable covering whose elements are open and have $\mu$-finite measure.
		\item
		The subset $\lbrace x\in A: \cd^\alpha(\mu,x)=0\rbrace$ is
		$\sigma$-finite with respect to $\cH^\alpha$.
    	\item
		We have the absolute continuity $\mu\res A<<\mathcal{H}^\alpha\res A$.
	\end{enumerate}
	Then $\cd^\alpha(\mu,\cdot):A\to[0,+\infty]$ is Borel and for every Borel set $B\subset A$ we have 
	\begin{equation}
	\mu(B)=\int_B\cd^\alpha(\mu,x)\,d\mathcal{H}^\alpha(x).
	\end{equation}
\end{The}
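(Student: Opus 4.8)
The plan is to deduce the statement from Theorem~\ref{the:meastheoarea}, applied with $\cS=\cF$ equal to the family of all closed subsets of $X$ and with $\zeta=\zeta_\alpha$. Under these choices one has $\psi_\zeta=\cH^\alpha$, the subfamily $\cS_{\mu,\zeta}$ becomes $\cF_{\mu,\zeta_\alpha}$, and $F^\zeta(\mu,\cdot)$ becomes $\cd^\alpha(\mu,\cdot)$. First I would dispose of the Borel measurability of the density, which is both the first assertion of the theorem and the standing hypothesis needed to invoke Theorem~\ref{the:meastheoarea}: since $\cF_{\mu,\zeta_\alpha}$ covers $A$ finely by assumption~(2), Theorem~\ref{th:Hmeasurability} gives that $\cd^\alpha(\mu,\cdot)\colon A\to[0,+\infty]$ is Borel with respect to the subspace topology of $A$; since $A$ is a Borel subset of $X$ by assumption~(3), any subset of $A$ which is Borel in the subspace topology is Borel in $X$, so $\cd^\alpha(\mu,\cdot)$ is a Borel function.

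It then remains to check the eight conditions of Theorem~\ref{the:meastheoarea}. Under the identifications above, conditions~(1), (3), (4), (5), (7), and (8) of that theorem are, respectively, assumptions~(1), (2), (3), (4), (5), and (6) of the present statement. Its condition~(2), that every element of $\cF_{\mu,\zeta_\alpha}$ be closed, is automatic because $\cF_{\mu,\zeta_\alpha}\subset\cF$ by construction. Hence the only condition of Theorem~\ref{the:meastheoarea} requiring an argument is the enlargement estimate \eqref{ceta-estim}.

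To verify it, fix $S\in\cF_{\mu,\zeta_\alpha}$ and let $\tilde S=\overline{\hat S}$, the closure of the enlargement $\hat S$ defined in \eqref{d:tau-enlargement}. Every set $T$ occurring in the union defining $\hat S$ meets $S$ and has $\diam(T)\le\tau\diam(S)$, so each point of $\hat S$ lies within distance $\tau\diam(S)$ of $S$; by the triangle inequality $\diam(\hat S)\le(1+2\tau)\diam(S)$, and the same bound holds for its closure $\tilde S$. Since $\tilde S$ is closed, $\tilde S\in\cF=\cS$, while clearly $\hat S\subset\tilde S$ and
\[
\zeta_\alpha(\tilde S)=c_\alpha\diam(\tilde S)^\alpha\le c_\alpha\big((1+2\tau)\diam(S)\big)^\alpha=(1+2\tau)^\alpha\,\zeta_\alpha(S).
\]
Therefore \eqref{ceta-estim} holds with $c=1+2\tau$ and $\eta=(1+2\tau)^\alpha$; note that this argument covers the degenerate case $\diam(S)=0$ as well, where $\hat S=S=\tilde S$.

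With all hypotheses of Theorem~\ref{the:meastheoarea} verified, that theorem gives $\mu(B)=\int_B\cd^\alpha(\mu,x)\,d\cH^\alpha(x)$ for every Borel set $B\subset A$, which together with the Borel measurability of $\cd^\alpha(\mu,\cdot)$ established above is the full conclusion. I do not expect any essential obstacle here: the argument is a matter of matching hypotheses once the gauge is recognized as $\zeta_\alpha$, the only computation of substance being the diameter bound $\diam(\hat S)\le(1+2\tau)\diam(S)$; the one point deserving care is the passage from ``Borel with respect to the subspace topology of $A$'' to ``Borel'', which relies on $A$ being Borel in $X$.
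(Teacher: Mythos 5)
Your proposal is correct and follows essentially the same route as the paper's proof: invoke Theorem~\ref{th:Hmeasurability} for the Borel measurability of $\cd^\alpha(\mu,\cdot)$ (using that $A$ is Borel to pass from the subspace topology to $X$), verify condition \eqref{ceta-estim} by taking $\tilde S=\overline{\hat S}$, and then apply Theorem~\ref{the:meastheoarea}. The only difference is that you spell out the diameter bound $\diam(\hat S)\le(1+2\tau)\diam(S)$ and the resulting constants $c=1+2\tau$, $\eta=(1+2\tau)^\alpha$, which the paper leaves implicit.
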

\begin{proof}
By Theorem~\ref{th:Hmeasurability} the function $\cd^\alpha(\mu,\cdot):A\to[0,+\infty]$ is Borel with respect to the subspace topology of $A$, hence it is also Borel. Condition \eqref{ceta-estim} of Theorem~\ref{the:meastheoarea} is satisfied by taking $\tilde S$ equal to the topological closure of $\hat S$, 
defined in \eqref{d:tau-enlargement}. Thus, we are in the assumptions of Theorem~\ref{the:meastheoarea} and our claim is established.
\end{proof}	
%
%
%
%
%
%
%
%
%
%
\begin{The}[Area formula for the Hausdorff measure II]\label{area formula haus}
Let $\mu$ be a measure over $X$, $\alpha>0$ and fix $A\subset X$.
We assume the validity of the following conditions.
	\begin{enumerate}
		\item 
		$\mu$ is a Borel regular measure.
		\item
		$\cF_{\mu,\zeta_\alpha}$ covers $A$ finely.
		\item  
		$A$ is ${\mathcal H^\alpha}$-measurable and $\sigma$-finite with respect to ${\mathcal{H}^\alpha}$.
		\item
		$A$ has a countable covering whose elements are open and have $\mu$-finite measure.
		\item
		We have the absolute continuity $\mu\res A<<\mathcal{H}^\alpha\res A$.
	\end{enumerate}
	Then  $\cd^\alpha(\mu,\cdot):A\to[0,+\infty]$ is $\cH^\alpha$-measurable,
	every $\mathcal{H}^\alpha$-measurable set $B\subset A$ is also $\mu$-measurable and
	we have 
	\begin{equation}
		\mu(B)=\int_B\cd^\alpha(\mu,x)\,d\mathcal{H}^\alpha(x).
	\end{equation}
\end{The}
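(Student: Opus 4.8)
The plan is to read this statement as the specialization of Theorem~\ref{theo-mea area form} (Measure-theoretic area formula II) to the gauge $\zeta=\zeta_\alpha$ defined on the class $\cS=\cF$ of all closed subsets of $X$, so that $\psi_\zeta=\cH^\alpha$, the subfamily $\Sr$ becomes $\cF_{\mu,\zeta_\alpha}$, and the Federer density $F^\zeta(\mu,\cdot)$ becomes $\cd^\alpha(\mu,\cdot)$. Under this identification, hypotheses (1), (3), (4) and (5) of the present statement are literally hypotheses (1), (4), (5) and (7) of Theorem~\ref{theo-mea area form}; hypothesis (2) of that theorem holds because every element of $\cF$ is closed (hence Borel) and $\Sr\subseteq\cF$ consists of closed sets; and hypothesis (3) of that theorem is precisely our assumption (2). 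Only two points require a short argument: the enlargement condition \eqref{ceta-estim-II} and the $\cH^\alpha$-measurability of $\cd^\alpha(\mu,\cdot)$.

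For the enlargement condition I would argue exactly as in the proof of Theorem~\ref{area formula haus Borel}: given $S\in\Sr$, take $\tilde S$ to be the topological closure of the enlargement $\hat S$ of \eqref{d:tau-enlargement}. Then $\tilde S$ is closed, so $\tilde S\in\cF=\cS$, and passing to the closure leaves the diameter unchanged. Every set $T$ entering the union that defines $\hat S$ meets $S$ and satisfies $\diam(T)\le\tau\diam(S)$, so a triangle-inequality estimate yields $\diam(\hat S)\le(2\tau+1)\diam(S)$. Hence with $c=2\tau+1$ and $\eta=(2\tau+1)^\alpha$ one gets $\hat S\subset\tilde S$, $\diam(\tilde S)\le c\,\diam(S)$ and $\zeta_\alpha(\tilde S)=c_\alpha\diam(\tilde S)^\alpha\le\eta\,c_\alpha\diam(S)^\alpha=\eta\,\zeta_\alpha(S)$, which is \eqref{ceta-estim-II}.

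For the measurability, I would invoke Theorem~\ref{th:Hmeasurability}: since $\cF_{\mu,\zeta_\alpha}$ covers $A$ finely, $\cd^\alpha(\mu,\cdot):A\to[0,+\infty]$ is Borel with respect to the subspace topology of $A$. Because $A$ is $\cH^\alpha$-measurable by hypothesis (3) and $\cH^\alpha$ is a (Borel regular) metric outer measure, every set that is relatively Borel in $A$ has the form $C\cap A$ with $C$ Borel in $X$, hence is $\cH^\alpha$-measurable; therefore $\cd^\alpha(\mu,\cdot)$ is $\cH^\alpha$-measurable, which is both part of the assertion and the measurability hypothesis needed to apply Theorem~\ref{theo-mea area form}.

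With all the hypotheses of Theorem~\ref{theo-mea area form} verified and the $\psi_\zeta$-measurability of $F^\zeta(\mu,\cdot)=\cd^\alpha(\mu,\cdot)$ established, that theorem gives directly that every $\cH^\alpha$-measurable $B\subset A$ is $\mu$-measurable and that $\mu(B)=\int_B\cd^\alpha(\mu,x)\,d\cH^\alpha(x)$, completing the proof. The only place where anything beyond bookkeeping is needed is the passage from ``Borel in the subspace topology of $A$'' to ``$\cH^\alpha$-measurable'' when $A$ is merely $\cH^\alpha$-measurable and not Borel; everything else is a direct transcription of the abstract result.
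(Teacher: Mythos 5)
Your proposal is correct and follows essentially the same route as the paper: invoke Theorem~\ref{th:Hmeasurability} for relative Borel measurability, upgrade to $\cH^\alpha$-measurability using that $A$ is $\cH^\alpha$-measurable, verify condition \eqref{ceta-estim-II} by taking $\tilde S$ to be the closure of the enlargement $\hat S$, and apply Theorem~\ref{theo-mea area form}. Your explicit constants $c=2\tau+1$, $\eta=(2\tau+1)^\alpha$ and the triangle-inequality bound on $\diam(\hat S)$ merely spell out details the paper leaves implicit.
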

\begin{proof}
Theorem~\ref{th:Hmeasurability} shows that $\cd^\alpha(\mu,\cdot):A\to[0,+\infty]$ is Borel with respect to the subspace topology of $A$. Since $A$ is $\cH^\alpha$-measurable,
we immediately observe that  $\cd^\alpha(\mu,\cdot)$ is also $\cH^\alpha$-measurable.
Condition \eqref{ceta-estim-II} of Theorem~\ref{theo-mea area form} is satisfied by taking $\tilde S$ equal to the topological closure of $\hat S$, 
	defined in \eqref{d:tau-enlargement}. Thus, the application of Theorem~\ref{theo-mea area form} concludes the proof.
\end{proof}

\begin{Rem}\rm 
If $\cF''$ is as in Remark~\ref{rem:F'}, then in the assumptions of either Theorem~\ref{area formula haus Borel} or Theorem~\ref{area formula haus} the collection $\cF''$ covers $A$ finely if and only if so does $(\cF)_{\mu,\zeta_{\alpha}}$.
As noticed in Remark~\ref{rem:F'}, from the equality $\cF''=(\cF'')_{\mu,\zeta_{\alpha}}\subset (\cF)_{\mu,\zeta_{\alpha}}$, if $\cF''$ covers $A$ finely clearly so does $(\cF)_{\mu,\zeta_\alpha}$. Conversely, if $(\cF)_{\mu,\zeta_{\alpha}}$ covers $A$ finely, then we fix any $x\in A$, getting a sequence of closed sets $\set{F_{x,k}}\subset(\cF)_{\mu,\zeta_{\alpha}}$ that all contain $x$ and
such that $\diam(F_{x,k})\to0$.
The absolute continuity of $\mu\res A$ with respect to $\cS^\alpha\res A$
implies that $(\cF)_{\mu,\zeta_{\alpha}}$ contains no points.
As a result, for all $k$ sufficiently large, we have $F_{x,k}\in\cF''$.
From the arbitrary choice of $x$ we infer that $\cF''$ covers $A$ finely.
\end{Rem}

The next area formulas will replace the Hausdorff measure with the spherical measure.
This requires a minimal regularity condition on the diameter function.
\begin{Def}[Diametric regularity]\label{diam reg}\rm
	A metric space $X$ is called \textit{diametrically regular} if for every $x\in X$ there exist an $R_x>0$ and a $\delta_{x}>0$ such that $(0,\delta_x)\ni r\to \text{diam}(B(y,r))$ is a continuous function for every $y\in{B}(x,R_x)$.
\end{Def}
\begin{Lem}\label{lemma sph}
If a metric space $X$ is diametrically regular, then for every $x\in X$ there exist $R_x>0$ and $\delta_x>0$ such that whenever $0<t<\delta_x$ and $y\in{B}(x,R_x)$, we have
	$$\diam(B(y,t))= \diam(\mathbb B(y,t)).$$
\end{Lem}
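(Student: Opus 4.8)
The plan is to deduce the claimed equality from a one-sided squeeze, using only the elementary inclusions between open and closed balls of nearby radii together with the continuity of $r\mapsto\diam(B(y,r))$ granted by diametric regularity.

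First I would fix $x\in X$ and let $R_x>0$ and $\delta_x>0$ be the constants produced by Definition~\ref{diam reg}, so that $(0,\delta_x)\ni r\mapsto\diam(B(y,r))$ is continuous for every $y\in B(x,R_x)$; these are exactly the constants I would carry into the conclusion. Fixing now $y\in B(x,R_x)$ and $0<t<\delta_x$, the trivial inclusion $B(y,t)\subset\B(y,t)$ immediately gives $\diam(B(y,t))\le\diam(\B(y,t))$, so the entire content of the lemma is the reverse inequality.

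For that, I would observe that for every $t'$ with $t<t'<\delta_x$ one has $\B(y,t)\subset B(y,t')$ (indeed $d(y,z)\le t$ forces $d(y,z)<t'$), whence $\diam(\B(y,t))\le\diam(B(y,t'))$. Letting $t'\downarrow t$ and invoking continuity of $r\mapsto\diam(B(y,r))$ at the interior point $t\in(0,\delta_x)$, the right-hand side converges to $\diam(B(y,t))$, so $\diam(\B(y,t))\le\diam(B(y,t))$ and the two diameters coincide. The argument is valid in $[0,+\infty]$: if $\diam(B(y,t))=+\infty$ the equality is already forced by the first inclusion, and otherwise the squeeze pins down a finite common value.

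I do not expect a genuinely hard step here; the only point to watch is that the regularity hypothesis concerns open balls whereas the statement involves closed balls, which is precisely why the estimate must be routed through $B(y,t')$ with a slightly larger radius $t'>t$ rather than by comparing $\diam(\B(y,t))$ and $\diam(B(y,t))$ directly. One should also check that such a $t'$ can be chosen inside $(0,\delta_x)$, which is guaranteed by the assumption $t<\delta_x$, so no further shrinking of $\delta_x$ is needed.
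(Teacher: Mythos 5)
Your argument is correct and is essentially identical to the paper's proof: both obtain the nontrivial inequality $\diam(\B(y,t))\le\diam(B(y,t))$ from the inclusion $\B(y,t)\subset B(y,t')$ for $t'\in(t,\delta_x)$ and then let $t'\searrow t$ using the continuity granted by diametric regularity. No gaps.
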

\begin{proof}
Let $x\in X$ and choose $R_x$ and $\delta_x$ as in Definition~\ref{diam reg}.	
For $0<t<\delta_x$, we consider any $t'\in(t,\delta_x)$ and observe that
$\diam(\mathbb B(y,t))\le \diam (B(y,t'))$ for all $y\in B(x,R_x)$. By diametric regularity,
passing to the limit as $t'\searrow t$, we obtain $\diam(\mathbb B(y,t))\le \diam (B(y,t))$,
that concludes the proof.
\end{proof}

\begin{Lem}\label{lem:coversx}
Let $\zeta_{o,\alpha}:\cF_o\to[0,+\infty)$ be defined on the family $\cF_o$ of all open balls of $X$ as $\zeta_{o,\alpha}(B)=c_\alpha\diam(B)^\alpha$ for every $B\in\cF_o$. 
We fix $x\in X$, $\alpha>0$ and consider a measure $\mu$ over $X$.
If $(\cF_b)_{\mu,\zeta_{b,\alpha}}$ covers $\set{x}$ finely, then
so does $(\cF_o)_{\mu,\zeta_{o,\alpha}}$.
\end{Lem}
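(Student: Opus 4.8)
The plan is to prove the fine covering condition directly: for an arbitrary $\ep>0$ I will show that the single open ball $B(x,\ep/3)$ already belongs to $(\cF_o)_{\mu,\zeta_{o,\alpha}}$. Since $x\in B(x,\ep/3)$ and, by the triangle inequality, $\diam(B(x,\ep/3))\le 2\ep/3<\ep$, producing such a ball for every $\ep$ is exactly what ``$(\cF_o)_{\mu,\zeta_{o,\alpha}}$ covers $\{x\}$ finely'' means. The key preliminary observation is that the only way $B(x,\ep/3)$ can fail to lie in the restricted family is through the degenerate condition in \eqref{d:Sr}: because $\zeta_{o,\alpha}$ is everywhere finite, $B(x,\ep/3)$ is excluded from $(\cF_o)_{\mu,\zeta_{o,\alpha}}$ precisely when $\zeta_{o,\alpha}(B(x,\ep/3))=\mu(B(x,\ep/3))=0$, i.e.\ when $B(x,\ep/3)=\{x\}$ and $\mu(\{x\})=0$.

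Accordingly I split into two cases. If $\diam(B(x,\ep/3))>0$, then $\zeta_{o,\alpha}(B(x,\ep/3))=c_\alpha\diam(B(x,\ep/3))^\alpha\in(0,+\infty)$, so neither degenerate alternative in \eqref{d:Sr} occurs and $B(x,\ep/3)\in(\cF_o)_{\mu,\zeta_{o,\alpha}}$. If instead $\diam(B(x,\ep/3))=0$, then $B(x,\ep/3)=\{x\}$, and here I invoke the hypothesis: applying the fine covering of $\{x\}$ by $(\cF_b)_{\mu,\zeta_{b,\alpha}}$ at scale $\ep/3$ yields a closed ball $S\in(\cF_b)_{\mu,\zeta_{b,\alpha}}$ with $x\in S$ and $\diam(S)<\ep/3$. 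Every $w\in S$ then satisfies $d(x,w)\le\diam(S)<\ep/3$, so $w\in B(x,\ep/3)=\{x\}$, forcing $S=\{x\}$. Since $\zeta_{b,\alpha}(\{x\})=0$ and $S$ is not excluded from $(\cF_b)_{\mu,\zeta_{b,\alpha}}$, the definition \eqref{d:Sr} forces $\mu(\{x\})>0$. But then $B(x,\ep/3)=\{x\}$ has $\zeta_{o,\alpha}(\{x\})=0<\mu(\{x\})$, so it is not excluded from $(\cF_o)_{\mu,\zeta_{o,\alpha}}$ either.

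In both cases $B(x,\ep/3)\in(\cF_o)_{\mu,\zeta_{o,\alpha}}$ contains $x$ and has diameter $<\ep$, and the arbitrariness of $\ep$ concludes the argument. I do not expect a genuine obstacle: the substance is careful bookkeeping with the exclusion set in \eqref{d:Sr}, and the one spot where the hypothesis is essential is the isolated-point case, where it is used precisely to guarantee $\mu(\{x\})>0$. The only subtlety worth flagging is that the hypothesis must be applied at scale $\ep/3$ rather than $\ep$, so that the closed ball it produces is forced to collapse to $\{x\}$.
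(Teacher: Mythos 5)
Your proof is correct. It uses the same witnesses as the paper --- small open balls centered at $x$ --- and the same key fact in the degenerate case, namely that a singleton closed ball in $(\cF_b)_{\mu,\zeta_{b,\alpha}}$ forces $\mu(\set{x})>0$; but your case analysis is organized differently and is genuinely more economical. The paper splits according to the behavior of the closed balls $\B(x_k,r_k)$ supplied by the hypothesis: when infinitely many of them have positive diameter it nests one of them inside the open ball $B(x,\ep_{\alpha(k)})$ in order to conclude that this open ball has positive diameter, and only when the closed balls are eventually singletons does it pass to $\mu(\set{x})>0$. You instead split directly on $\diam(B(x,\ep/3))$. This makes the non-degenerate case immediate --- since $\zeta_{o,\alpha}$ is finite-valued, a positive diameter already rules out both exclusion clauses of \eqref{d:Sr}, so the hypothesis is not needed there at all --- and it isolates the use of the hypothesis to the one place it is genuinely required, the case $B(x,\ep/3)=\set{x}$. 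Your remark that the hypothesis must be invoked at a scale small enough to force the resulting closed ball to collapse into $B(x,\ep/3)$ is exactly the right point of care. The paper's longer route establishes nothing beyond what your argument gives, so your version is a clean simplification of the same underlying idea.
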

\begin{proof}
We consider an infinitesimal sequence of positive numbers $\epsilon_k\searrow 0$. By assumption, there exists
a sequence of closed metric balls $\B(x_k,r_k)\in(\cF_b)_{\mu,\zeta_{b,\alpha}}$ 
such that $x\in \B(x_k,r_k)$ and $\diam(\B(x_k,r_k))< \ep_k$ for all $k\in\N$.
We have only two possible cases. In the first one there exists $k_0\in\N$ such that
$\diam(\B(x_k,r_k))=0$ for all $k\ge k_0$.
For these $k$'s, we get $\B(x_k,r_k)=\set{x}$, hence $\mu(\set{x})>0$. 
If $\diam(B(x,\ep_k/4))>0$, then clearly $B(x,\ep_k/4)\in(\cF_o)_{\mu,\zeta_{o,\alpha}}$
and $\diam(B(x,\ep_k/4))<\ep_k$. 
In the remaining case, there exists a subsequence $r_{\alpha(k)}$ of $r_k$ such that
$0<\diam(\B(x_{\alpha(k)},r_{\alpha(k)}))<\ep_{\alpha(k)}$ and $\B(x_{\alpha(k)},r_{\alpha(k)})\in(\cF_b)_{\mu,\zeta_{b,\alpha}}$.
We now fix any $k\in\N$. There exists $k_1>k$ such that 
$$0<2\,\diam(\B(x_{\alpha(k_1)},r_{\alpha(k_1)}))<\diam(\B(x_{\alpha(k)},r_{\alpha(k)}))<\ep_{\alpha(k)}$$
and $x\in\B(x_{\alpha(k_1)},r_{\alpha(k_1)})$.
We observe that for every $y\in \B(x_{\alpha(k_1)},r_{\alpha(k_1)})$, we have
\[
d(y,x)\le d(y,x_{\alpha(k_1)})+d(x,x_{\alpha(k_1)})\le 2\,\diam(\B(x_{\alpha(k_1)},r_{\alpha(k_1)}))<\ep_{\alpha(k)},
\]
therefore $\B(x_{\alpha(k_1)},r_{\alpha(k_1)})\subset B(x,\ep_{\alpha(k)})$.
We have proved that $\diam(B(x,\ep_{\alpha(k)}))>0$ for every $k\in\N$,
hence $B(x,\ep_{\alpha(k)})\in(\cF_o)_{\mu,\zeta_{o,\alpha}}$.
In any of the two cases $(\cF_o)_{\mu,\zeta_{o,\alpha}}$ covers $\set{x}$ finely.
\end{proof}

\begin{The}\label{th:spher Borel}
Let $X$ be diametrically regular, $\alpha>0$ and let $\mu$ be a measure over $X$. 
If $A\subset X$ and $(\cF_b)_{\mu,\zeta_{b,\alpha}}$ covers $A$ finely, then $\cs^\alpha(\mu,\cdot):A\to[0,+\infty]$ is Borel with respect to the subspace topology of $A$.
\end{The}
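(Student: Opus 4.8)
The plan is to adapt the proof of Theorem~\ref{th:Hmeasurability}: there the lower semicontinuity of $\cd^\alpha_\delta(\mu,\cdot)$ came from enlarging a competitor $S$ to $S\cup\B(y,\epsilon)$, whose diameter exceeds $\diam(S)$ by at most $2\epsilon$; for balls this move is unavailable, and increasing the radius of a closed ball can make its diameter jump. Diametric regularity is exactly what forbids such jumps, and it is the tool I would use. For $\delta>0$ and $x\in A$ set
\[
\cs^\alpha_\delta(\mu,x)=\sup\{Q_{\mu,\zeta_{b,\alpha}}(S):x\in S\in(\cF_b)_{\mu,\zeta_{b,\alpha}},\ \diam(S)<\delta\},
\]
which is well defined on $A$ because $(\cF_b)_{\mu,\zeta_{b,\alpha}}$ covers $A$ finely, and which satisfies $\cs^\alpha(\mu,x)=\inf_{k\in\N}\cs^\alpha_{2^{-k}}(\mu,x)$ monotonically in $\delta$. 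So it suffices to produce a countable cover of $A$ by relatively open sets on each of which the relevant $\cs^\alpha_\delta(\mu,\cdot)$ are Borel.

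First I would fix the scale at which diametric regularity becomes usable. With $R_x,\delta_x$ as in Definition~\ref{diam reg}, set, for $z\in A$,
\[
\ell(z)=\sup\{\min(\delta_x,\,R_x-d(x,z)):x\in X,\ d(x,z)<R_x\}\in(0,+\infty].
\]
A short verification shows that $A_\delta:=\{z\in A:\ell(z)>\delta\}$ is relatively open in $A$ for every $\delta>0$, that $A=\bigcup_{k\in\N}A_{2^{-k}}$, and — this is the point — that whenever $z\in A_\delta$ and $w\in X$ satisfy $d(w,z)<\delta$, one has $w\in B(x,R_x)$ for some $x$ with $\delta_x>\delta$, so that Definition~\ref{diam reg} together with Lemma~\ref{lemma sph} gives that $s\mapsto\diam(\B(w,s))=\diam(B(w,s))$ is continuous on $(0,\delta)$.

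Next I would show that $\cs^\alpha_\delta(\mu,\cdot)$ is lower semicontinuous, hence Borel, on $A_\delta$. Fix $t>0$ and $y\in A_\delta$ with $\cs^\alpha_\delta(\mu,y)>t$, and choose $S=\B(x_0,r_0)\in(\cF_b)_{\mu,\zeta_{b,\alpha}}$ with $y\in S$, $\diam(S)<\delta$ and $Q_{\mu,\zeta_{b,\alpha}}(S)>t$. Replacing $r_0$ by $\sup_{z\in S}d(x_0,z)$ does not change $S$, so we may assume $r_0\le\diam(S)$, whence $d(x_0,y)\le r_0\le\diam(S)<\delta$. If $\diam(S)=0$, then $S=\{y\}$ and $\mu(\{y\})>0$, and the argument is verbatim the corresponding case of the proof of Theorem~\ref{th:Hmeasurability}: one produces $\epsilon\in(0,\delta/2)$ with $\B(y,\epsilon)\in(\cF_b)_{\mu,\zeta_{b,\alpha}}$, $\diam(\B(y,\epsilon))<\delta$ and $Q_{\mu,\zeta_{b,\alpha}}(\B(y,\epsilon))>t$, and since $B(y,\epsilon)\subset\B(y,\epsilon)$ this gives $B(y,\epsilon)\cap A\subset\{\cs^\alpha_\delta(\mu,\cdot)>t\}$. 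If $\diam(S)>0$, then $r_0>0$, and since $d(x_0,y)<\delta<\ell(y)$ the function $s\mapsto\diam(\B(x_0,s))$ is continuous on $(0,\delta)$ by the previous paragraph, in particular right-continuous at $r_0$, so $\diam(\B(x_0,r_0+\epsilon))\to\diam(S)$ as $\epsilon\to0^+$. Since moreover $\mu(\B(x_0,r_0+\epsilon))\ge\mu(S)$ and $t\,c_\alpha\diam(S)^\alpha<\mu(S)$ (this being $Q_{\mu,\zeta_{b,\alpha}}(S)>t$), for all sufficiently small $\epsilon>0$ one has $r_0+\epsilon<\delta$, $\diam(\B(x_0,r_0+\epsilon))<\delta$ and $t\,c_\alpha\diam(\B(x_0,r_0+\epsilon))^\alpha<\mu(S)\le\mu(\B(x_0,r_0+\epsilon))$, so that $\B(x_0,r_0+\epsilon)\in(\cF_b)_{\mu,\zeta_{b,\alpha}}$ has diameter $<\delta$ and $Q_{\mu,\zeta_{b,\alpha}}(\B(x_0,r_0+\epsilon))>t$; as $B(y,\epsilon)\subset\B(x_0,r_0+\epsilon)$, again $B(y,\epsilon)\cap A\subset\{\cs^\alpha_\delta(\mu,\cdot)>t\}$. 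In both cases a relative neighbourhood of $y$ in $A_\delta$ lies inside the superlevel set, so it is relatively open, which is the claimed lower semicontinuity.

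Finally I would glue: for $j\ge k$ one has $A_{2^{-j}}\supseteq A_{2^{-k}}$, so each $\cs^\alpha_{2^{-j}}(\mu,\cdot)$ is Borel on $A_{2^{-k}}$, and hence $\cs^\alpha(\mu,\cdot)|_{A_{2^{-k}}}=\inf_{j\ge k}\cs^\alpha_{2^{-j}}(\mu,\cdot)|_{A_{2^{-k}}}$ is a countable infimum of Borel functions and so is Borel on $A_{2^{-k}}$; since $\{A_{2^{-k}}\}_{k\in\N}$ is a countable cover of $A$ by relatively open sets, $\cs^\alpha(\mu,\cdot)$ is Borel with respect to the subspace topology of $A$. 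The step I expect to be the main obstacle is the case $\diam(S)>0$ in the third paragraph together with the fact that the parameters $R_x,\delta_x$ of diametric regularity are not uniform over $A$; both are absorbed into the auxiliary scale $\ell$. An alternative would be to pass to open balls: by Lemma~\ref{lem:coversx} the family $(\cF_o)_{\mu,\zeta_{o,\alpha}}$ also covers $A$ finely, one checks $F^{\zeta_{b,\alpha}}(\mu,\cdot)=F^{\zeta_{o,\alpha}}(\mu,\cdot)$ on $A$ using diametric regularity, and then the enlargement of $B(x_0,r_0)$ to $B(x_0,r_0+\epsilon)$ makes diametric regularity apply to open balls directly, the scale $\ell$ still being needed for the gluing.
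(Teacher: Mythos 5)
Your proof is correct, but it takes a genuinely different route from the paper's. The paper first passes to the gauge $\zeta_{o,\alpha}$ on \emph{open} balls: it shows via Lemma~\ref{lem:coversx} that $(\cF_o)_{\mu,\zeta_{o,\alpha}}$ covers $A$ finely, spends most of the proof establishing the pointwise identity $\cs^\alpha(\mu,x)=F^{\zeta_{o,\alpha}}(\mu,x)$ on $A$ (this is where diametric regularity enters, in both inequalities), and then gets Borel measurability of $F^{\zeta_{o,\alpha}}_\delta(\mu,\cdot)$ essentially for free, since an open ball witnessing $Q_{\mu,\zeta_{o,\alpha}}(B)>t$ is itself an open neighbourhood of every point it contains. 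You instead work directly with closed balls and recover lower semicontinuity of $\cs^\alpha_\delta(\mu,\cdot)$ by enlarging $\B(x_0,r_0)$ to $\B(x_0,r_0+\epsilon)$, using diametric regularity (through Lemma~\ref{lemma sph}) to keep the diameter from jumping; since the enlargement happens at a center $x_0$ possibly different from the point $y$ under consideration, you need the parameters $R_x,\delta_x$ to be usable uniformly near $y$, which your auxiliary scale $\ell$ and the relatively open exhaustion $A=\bigcup_k A_{2^{-k}}$ handle correctly, the gluing at the end being standard. What each approach buys: the paper's identity $\cs^\alpha=F^{\zeta_{o,\alpha}}$ is of independent use and makes the measurability step trivial, at the cost of a long two-sided density comparison; your argument avoids that comparison entirely but pays with the localization machinery. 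One small correction to your closing remark: in the open-ball route the scale $\ell$ is \emph{not} needed for any gluing, because the semicontinuity of $F^{\zeta_{o,\alpha}}_\delta$ uses only that open balls are open, with no diametric regularity at all — that hypothesis is consumed entirely by the pointwise identification of the two densities.
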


\begin{proof}
We consider the gauge $\zeta_{o,\alpha}$ of Lemma~\ref{lem:coversx}.
The lemma shows that in our assumptions also 
$(\cF_o)_{\mu,\zeta_{o,\alpha}}$ covers $A$ finely, so also the
Federer density $F^{\zeta_{o,\alpha}}(\mu,x)$ is well defined at each point $x\in A$.
Our first claim is the following equality
\begin{equation}\label{eq:salphaFalpha}
	\cs^\alpha(\mu,x)=F^{\zeta_{o,\alpha}}(\mu,x)
\end{equation}
for every $x\in A$. We fix $x\in A$ and consider $R_x>0$ and $\delta_x>0$ 
of Definition~\ref{diam reg}.
We wish to prove that 
\[
F^{\zeta_{o,\alpha}}(\mu,x)\le\cs^\alpha(\mu,x).
\]
It is equivalent to showing that
	\begin{equation}\label{eq 2.11'}
		\inf_{\delta>0}\sup_{\begin{smallmatrix}&x\in B\in(\cF_o)_{\mu,\zeta_{o,\alpha}}\\ &\text{diam}(B)<\delta\end{smallmatrix}}Q_{\mu,\zeta_{o,\alpha}}(B)\le
		\inf_{\delta>0}\sup_{\begin{smallmatrix}&x\in B\in(\cF_b)_{\mu,\zeta_{b,\alpha}}\\ &\text{diam}(B)<\delta\end{smallmatrix}}Q_{\mu,\zeta_{b,\alpha}}(B).
	\end{equation}
We fix $\delta>0$ such that $\delta<\min\{R_x,\delta_x\}$ and choose
$B\in(\cF_o)_{\mu,\zeta_{o,\alpha}}$ such that $x\in B$ and $\diam(B)<\delta$. We may write $B=B(y,r)$ for some $y\in X$ and $r>0$. We define $$t_y=\sup\{d(y,z):z\in B\}\le r$$
and notice that clearly $B\subset \mathbb B(y,t_y)$, hence \begin{equation}\label{2.100}
	\mu(B)\le\mu(\mathbb B(y,t_y)).
\end{equation}
We observe that $t_y\le r$ and $$t_y\le\diam(B)<\delta<\min\{R_x,\delta_x\}.$$
In particular, $y\in B(x,R_x)$ and $t_y<\delta_x$.
Since $B\subset \mathbb B(y,t_y)\subset B(y,t_y')$ for every $t_y'>t_y$, then $\diam(B)\le \diam (B(y,t_y'))$ and by diametric regularity for 
$t_y'\searrow t_y$ and $t_y'<\delta_x$, we obtain
	\begin{equation}\label{2.200}
		\diam(B)\le\diam(B(y,t_y)).
	\end{equation}
	Moreover $B(y,t_y)\subset B(y,r)$ implies that 
	\begin{equation}\label{2.300}
		\diam(B(y,t_y))\le\diam(B(y,r)).
	\end{equation}
	By inequalities (\ref{2.200}), (\ref{2.300}) and by Lemma~\ref{lemma sph} applied to the ball ${B}(y,t_y)$, we obtain
	\begin{equation}\label{2.400}
		\diam(B)=\diam(\mathbb{B}(y,t_y)).
	\end{equation}
	If $\diam (B)>0$, by equation (\ref{2.400}) we have $\B(y,t_y)\in (\cF_b)_{\mu,\zeta_{b,\alpha}}$ and using (\ref{2.100}) and again equation (\ref{2.400}), we obtain
	$$Q_{\mu,\zeta_{o,\alpha}}(B)=\frac{\mu(B)}{c_\alpha\diam(B)^\alpha}\le\frac{\mu(\mathbb B(y,t_y))}{c_\alpha\diam(\mathbb B(y,t_y))^\alpha}=Q_{\mu,\zeta_{b,\alpha}}(\mathbb B(y,t_y)).$$
	If $\diam (B(y,r))=0$, taking into account that $B\in(\cF_o)_{\mu,\zeta_{o,\alpha}}$,
	we have $\mu(B(y,r))>0$. In addition, $B(y,r)=\mathbb B(y,\frac{r}{2})=\{y\}=\{x\}$ and $\diam(\mathbb B(x,\frac{r}{2}))=0$, hence
	$$Q_{\mu,\zeta_{o,\alpha}}\left(B(y,r)\right)=Q_{\mu,\zeta_{b,\alpha}}\left(\mathbb B\left(y,\frac{r}{2}\right)\right)=+\infty.$$
	We have proved that for every $\delta>0$ such that $\delta<\min\{R_x,\delta_x\}$
	the inequality (\ref{eq 2.11'}) holds. 
	
To prove the opposite inequality, we first consider the case $\cs^\alpha(\mu,x)=+\infty$.
Then we can find a family of closed balls $\{C_k\}_{k\in\N}\subset
(\cF_b)_{\mu,\zeta_{b,\alpha}}$ such that, for every $k\in\mathbb N$ we have $\diam(C_k)<2^{-k}$, $x\in C_k$ and 
$$
\lim_{k\to +\infty} Q_{\mu,\zeta_{b,\alpha}}(C_k)=+\infty.
$$
If there exists $k_0\in\N$ such that for every $k\in\mathbb{N}$ 
with $k\ge k_0$ we have $\diam(C_k)>0$, then there exists $\tilde r_k>0$ such
that $C_k=\B(x_k,\tilde r_k)$ and the following supremum satisfies
\[
r_k=\sup\left\{d(z,x_k):z\in C_k\right\}>0\quad \text{and}\quad r_k\le \tilde r_k.
\]
We have seen that $\B(x_k,r_k)\subset C_k$.
On the other hand, by definition of $r_k$ we also get 
\[
C_k\subset\mathbb B(x_k, r_k),
\]
hence $C_k=\mathbb B(x_k, r_k)$. 
We select any $r'_k\in(r_k,2 r_k]$ and observe that
$$
0<\diam(C_k)\le\diam(B(x_k,r'_k))\le2 r'_k\le4r_k\le4\diam(C_k),
$$
where the last inequality follows from the definition of $r_k$.
It follows that 
$$
\frac{\mu({B}(x_k,r'_k))}{\zeta_{o,\alpha}({B}(x_k,r'_k))}\ge
4^{-\alpha}\frac{\mu(C_k)}{\zeta_{b,\alpha}( C_k)}=4^{-\alpha}Q_{\mu,\zeta_{b,\alpha}}(C_k)\to+\infty
$$ 
as $k\to+\infty$. This esablishes the opposite inequality when the diameters of $C_k$ are positive for all large $k$'s. In the remaining case there exists a strictly increasing sequence of integers $\alpha(k)\in\N$ such that 
such that $\diam(C_{\alpha(k)})=0$ for all $k\in\N$.
It follows that $C_{\alpha(k)}=\{x\}$ and $\mu(\{x\})>0$. 
We choose any deacreasing sequence $s_k\searrow 0$, hence 
$$
\limsup_{k\to +\infty}
\frac{\mu({B}(x,s_k))}{\zeta^{o,\alpha}({B}(x,s_k))}\ge\limsup_{k\to +\infty}\frac{\mu(\{x\})}{c_\alpha\diam({B}(x,s_k))^\alpha}\ge\limsup_{k\to +\infty}\frac{\mu(\{x\})}{c_\alpha(2s_k)^\alpha}=+\infty.
$$
We have shown that $F^{\zeta_{o,\alpha}}(\mu,x)=+\infty$, hence concluding the
proof of the inequality
\begin{equation}\label{eq:FSalpha}
\cs^\alpha(\mu,x)\le F^{\zeta_{o,\alpha}}(\mu,x)
\end{equation}
in the case $\cs^\alpha(\mu,x)=+\infty$.  
We will prove \eqref{eq:FSalpha} when $\cs^\alpha(\mu,x)<+\infty$.
It is convenient to restrict our attention to the following subset of integers 
\begin{equation*}
		\Lambda=\{k\in\mathbb{N}:2^{-k}<\min\{R_x,\delta_{x}\}\},
\end{equation*}
where $R_x$ and $\delta_x$ are those of Definition~\ref{diam reg}.
By our assumptions, there exists a family 
$$\{D_k: k\in\Lambda\}\subset(\cF_b)_{\mu,\zeta_{b,\alpha}}$$
of closed balls such that $x\in D_k$ for all $k\in\N$ and there exists 
$k_1\in\N$ such that for every $k\in\Lambda$ with $k\ge k_1$ we have
$\diam(D_k)<2^{-k}$ and $$\sup\left\{Q_{\mu,\zeta^{b,\alpha}}(D):x\in D,\, D\in(\cF_b)_{\mu,\zeta_{b,\alpha}},\,\diam(D)<2^{-k}\right\}-2^{-k}<Q_{\mu,\zeta_{b,\alpha}}(D_k)<+\infty.
$$ As a consequence, for these $k$'s we have that $\zeta_{b,\alpha}(D_k)>0$ and $$Q_{\mu,\zeta_{b,\alpha}}(D_k)=\frac{\mu(D_k)}{\zeta_{b,\alpha}(D_k)}.$$
We choose any $k\in\Lambda$ with $k\ge k_0$.
Arguing as before, we find a center $y_k\in D_k$ and the following positive radius
\[
t_k=\sup\left\{d(z,y_k):z\in D_k\right\}\le \diam(D_k)<2^{-k}<\delta_x
\]
such that $D_k=\B(y_k,t_k)$. We know that
$d(y_k,x)\le \diam(D_k)<2^{-k}<R_x$,
therefore the diametric regularity and Lemma~\ref{lemma sph} imply that
$s\to\diam(B(y_k,s))=\diam(\B(y_k,s))$ is continuous on $(0,\delta_x)$.
Then we can find $t'_k\in(t_k,\delta_x)$ such that $0<\diam(B(y_k,t_k'))<2^{-k}$ and
\begin{equation}\label{2.12}
\frac{\mu(D_k)}{\zeta_{b,\alpha}(D_k)}-2^{-k}
<\frac{\mu(D_k)}{\zeta_{o,\alpha}(B(y_k,t_k'))}\le
		\frac{\mu(B(y_k,t_k'))}{\zeta_{o,\alpha}(B(y_k,t_k'))}.
\end{equation}
We have proved that 
$$
Q_{\mu,\zeta_{b,\alpha}}(D_k)-2^{-k}\le\sup
\left\{Q_{\mu,\zeta^{b,\alpha}}(B):\,x\in B, \, B\in(\cF_o)_{\mu,\zeta_{o,\alpha}},\diam(B)<2^{-k}\right\},
$$ 
for all $k\in\Lambda$ with $k\ge k_1$. As $k\to+\infty$, the opposite 
\eqref{eq:FSalpha} follows and the claim \eqref{eq:salphaFalpha} is established.
Thus, the proof is finished if we show that $F^{\zeta_{o,\alpha}}:A\to[0,+\infty]$ is 
Borel with respect to the subspace topology of $A$.
It suffices to show such measurability for 
$F^{\zeta_{o,\alpha}}_\delta(\mu,\cdot) :A\to [0,+\infty]$ defined as follows 
$$
F^{\zeta_{o,\alpha}}_\delta(\mu,x)
=\sup\left\{Q_{\mu,\zeta_{o,\alpha}}(B):\,x\in B,\, B\in (\cF_o)_{\mu,\zeta_{o,\alpha}},\text{diam(S)}<\delta\right\}$$
when $\delta>0$ is arbitrarily chosen.
We fix $t>0$ and consider the set 
$$
\cS_{t,\delta}=\{z\in A:\,{F^{\zeta_{o,\alpha}}_\delta}(\mu,z)>t\}.
$$ 
If $y\in\cS_{t,\delta}$, then we can find an open ball $B\in (\cF_o)_{\mu,\zeta_{o,\alpha}}$ such that $x\in B$, $\diam(B)<\delta$ and $Q_{\mu,\zeta_{o,\alpha}}(B)>t$. For every $z\in B\cap A$, we have 
$$
F_\delta^{\zeta_{o,\alpha}}(\mu,z)\ge Q_{\mu,\zeta^{o,\alpha}}(B)>t,
$$
so that $z\in \mathcal{S}_{t,\delta}$, hence $B\cap A\subset \mathcal{S}_{t,\delta}$. 
This shows that $\cS_{t,\delta}$ is open in the subspace topology of $A$ and in
particular it is Borel with respect to this topology.
\end{proof}

\begin{Rem}\label{rem Fb'}\rm
If we denote by $\cF_b'$ the family of closed balls with positive diameter,
then clearly $\cF_b'=(\cF_b')_{\mu,\zeta_{b,\alpha}}$. As a result, 
if $\cF_b'$ covers $A$ finely, then $(\cF_b)_{\mu,\zeta_{b,\alpha}}$ also covers $A$ finely
and the Federer density $\cs^\alpha(\mu,\cdot)$ of Theorem~\ref{th:spher Borel} is well defined.
For instance, if for every $x\in A$ there exists a sequence of closed
metric balls $\B_{x,k}$ (depending on $x$) with positive diameter and containing $x$, such that $\diam(\B_{x,k})\to 0$, then $\cF_b'$ covers $A$ finely and so does $(\cF)_{\mu,\zeta_{b,\alpha}}$.
\end{Rem}

%
%
%
%
%
%
%
%
%
\begin{The}[Area formula for the spherical measure I]\label{th:area formula sph Borel}
	Let $\mu$ be a measure over $X$, $\alpha>0$ and fix $A\subset X$.
	We assume the validity of the following conditions.
	\begin{enumerate}
		\item
		$X$ is a diametrically regular metric space.		
		\item 
		$\mu$ is both a regular measure and a Borel measure.
		\item
		$(\cF_b)_{\mu,\zeta_{b,\alpha}}$ covers $A$ finely.
		\item  
		$A$ is a Borel set.
		\item
		$A$ has a countable covering whose elements are open and have $\mu$-finite measure.
		\item
		The subset $\lbrace x\in A: \cs^\alpha(\mu,x)=0\rbrace$ is
		$\sigma$-finite with respect to $\cS^\alpha$.
		\item
		We have the absolute continuity $\mu\res A<<\cS^\alpha\res A$.
	\end{enumerate}
	Then $\cs^\alpha(\mu,\cdot):A\to[0,+\infty]$ is Borel and for every Borel set $B\subset A$ we have 
	\begin{equation}
	\mu(B)=\int_B\cs^\alpha(\mu,x)\,d\cS^\alpha(x).
	\end{equation}
\end{The}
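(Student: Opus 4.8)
The plan is to derive the statement as the instance $\zeta=\zeta_{b,\alpha}$ of the general measure-theoretic area formula, Theorem~\ref{the:meastheoarea}. With this choice one has $\cS=\cF_b$, $\psi_\zeta=\cS^\alpha$, the class $\Sr$ of \eqref{d:Sr} becomes $(\cF_b)_{\mu,\zeta_{b,\alpha}}$, and the Federer density $F^\zeta(\mu,\cdot)$ is exactly $\cs^\alpha(\mu,\cdot)$. The first step is the measurability of the integrand: since $X$ is diametrically regular by hypothesis~(1) and $(\cF_b)_{\mu,\zeta_{b,\alpha}}$ covers $A$ finely by hypothesis~(3), Theorem~\ref{th:spher Borel} gives that $\cs^\alpha(\mu,\cdot):A\to[0,+\infty]$ is Borel with respect to the subspace topology of $A$; since $A$ is a Borel subset of $X$ by hypothesis~(4), every set that is Borel for the subspace topology of $A$ is Borel in $X$, hence $\cs^\alpha(\mu,\cdot)$ is Borel, which is precisely the measurability required by Theorem~\ref{the:meastheoarea}.

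The second step is to match the remaining hypotheses of Theorem~\ref{the:meastheoarea} with those assumed here. Recalling that $\cS_{\mu,\zeta}=(\cF_b)_{\mu,\zeta_{b,\alpha}}$, $\psi_\zeta=\cS^\alpha$ and $F^\zeta(\mu,\cdot)=\cs^\alpha(\mu,\cdot)$, conditions~(1),~(3),~(4),~(5),~(7) and~(8) of Theorem~\ref{the:meastheoarea} are, in the same order, the present conditions~(2),~(3),~(4),~(5),~(6) and~(7); its condition~(2) is immediate, since every closed ball is closed. The only point that genuinely requires an argument, and hence the (modest) main obstacle, is condition~\eqref{ceta-estim} of Theorem~\ref{the:meastheoarea}: in the Hausdorff analog of Theorem~\ref{area formula haus Borel} one simply takes $\tilde S$ to be the topological closure of the enlargement $\hat S$, but here $\cS=\cF_b$ consists only of balls, so $\hat S$ must be enclosed inside a single closed ball. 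I would argue as follows. Fix $S=\B(x,r)\in(\cF_b)_{\mu,\zeta_{b,\alpha}}$ and set $d_S=\diam(S)$. For any $T\in(\cF_b)_{\mu,\zeta_{b,\alpha}}$ with $T\cap S\ne\emptyset$ and $\diam(T)\le\tau\,d_S$, pick $w\in T\cap S$; since $w,x\in\B(x,r)$ we have $d(w,x)\le\diam(\B(x,r))=d_S$, whence $d(z,x)\le\diam(T)+d_S\le(\tau+1)d_S$ for every $z\in T$, and therefore $\hat S\subset\B(x,(\tau+1)d_S)$. If $d_S>0$, the choice $\tilde S=\B(x,(\tau+1)\diam(S))\in\cF_b$ yields $\hat S\subset\tilde S$, $\diam(\tilde S)\le 2(\tau+1)\diam(S)$ and $\zeta_{b,\alpha}(\tilde S)=c_\alpha\diam(\tilde S)^\alpha\le(2(\tau+1))^\alpha\zeta_{b,\alpha}(S)$, so condition~\eqref{ceta-estim} holds with $c=2(\tau+1)$ and $\eta=(2(\tau+1))^\alpha$; in the degenerate case $d_S=0$ one has $S=\hat S=\{x\}$ and $\tilde S=S$ works.

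Once all hypotheses of Theorem~\ref{the:meastheoarea} have been checked and $\cs^\alpha(\mu,\cdot)$ is known to be Borel, that theorem produces, for every Borel set $B\subset A$, the area formula $\mu(B)=\int_B\cs^\alpha(\mu,x)\,d\cS^\alpha(x)$, which is the assertion. Since all the substantial work has already been isolated in Theorems~\ref{th:spher Borel} and~\ref{the:meastheoarea}, and the verification of~\eqref{ceta-estim} above is elementary, I do not expect any further difficulty: the proof runs entirely parallel to that of Theorem~\ref{area formula haus Borel}, the only new ingredient being the enclosure of the enlargement inside a ball.
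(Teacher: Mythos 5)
Your proposal is correct and follows essentially the same route as the paper: Borel measurability via Theorem~\ref{th:spher Borel}, verification of condition~\eqref{ceta-estim} by enclosing the enlargement $\hat S$ in the closed ball $\B(x,(1+\tau)\diam(S))$, and then an application of Theorem~\ref{the:meastheoarea}. Your explicit containment estimate and your separate treatment of the degenerate case $\diam(S)=0$ are slightly more detailed than the paper's one-line verification, but the argument is the same.
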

\begin{proof}
Due to Theorem~\ref{th:spher Borel}, the density 
$\cs^\alpha(\mu,\cdot):A\to[0,+\infty]$ is Borel with respect to the subspace topology of $A$, hence by our assumptions it is also Borel. 
If $D_0=\B(x,r)$ is any closed ball, we consider its enlargement
\begin{equation}\label{d:Ball tau-enlargement}
	\widehat{D_0}=\bigcup\{D\in(\cF_b)_{\mu,\zeta_{b,\alpha}}:\,D\cap D_0\ne\emptyset\;\text{and}\;\diam(D)\le\tau\,\diam(D_0)\},
\end{equation}
according to \eqref{d:tau-enlargement}.
Then the closed ball $\widetilde{D_0}=\B(x,(1+\tau)\diam(D_0))$ contains $\widehat D_0$ 
and satisfies condition \eqref{ceta-estim} of Theorem~\ref{the:meastheoarea}.
All assumptions of Theorem~\ref{the:meastheoarea} are satisfied and this leads us to the conclusion.
\end{proof}	
%

%
%
%
%
%
%
%
%
%
%
\begin{The}[Area formula for the spherical measure II]\label{area formula spher meas}
	Let $\mu$ be a measure over $X$, $\alpha>0$ and fix $A\subset X$.
	We assume the validity of the following conditions.
	\begin{enumerate}
		\item 
		$X$ is a diametrically regular metric space.
		\item
		$\mu$ is a Borel regular measure.
		\item
		$(\cF_b)_{\mu,\zeta_{b,\alpha}}$ covers $A$ finely.
		\item  
		$A$ is ${\mathcal S^\alpha}$-measurable and $\sigma$-finite with respect to ${\mathcal{S}^\alpha}$.
		\item
		$A$ has a countable covering whose elements are open and have $\mu$-finite measure.
		\item
		We have the absolute continuity $\mu\res A<<\mathcal{S}^\alpha\res A$.
	\end{enumerate}
	Then  $\cs^\alpha(\mu,\cdot):A\to[0,+\infty]$ is $\cS^\alpha$-measurable,
	every $\cS^\alpha$-measurable set $B\subset A$ is also $\mu$-measurable and
	we have 
	\begin{equation} 
		\mu(B)=\int_B\cs^\alpha(\mu,x)\,d\cS^\alpha(x).
	\end{equation}
\end{The}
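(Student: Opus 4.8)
The plan is to obtain the statement as the specialization of Theorem~\ref{theo-mea area form} to the gauge $\zeta_{b,\alpha}:\cF_b\to[0,+\infty)$ defined on the family $\cF_b$ of all closed balls of $X$, in complete analogy with the way Theorem~\ref{area formula haus} follows from Theorem~\ref{theo-mea area form} for $\zeta_\alpha$, and with the way Theorem~\ref{th:area formula sph Borel} follows from Theorem~\ref{the:meastheoarea}. The first step is to record the measurability of the density: by Theorem~\ref{th:spher Borel}, hypotheses (1) and (3) of the present statement (diametric regularity of $X$ and the fact that $(\cF_b)_{\mu,\zeta_{b,\alpha}}$ covers $A$ finely) already guarantee that $\cs^\alpha(\mu,\cdot)=F^{\zeta_{b,\alpha}}(\mu,\cdot):A\to[0,+\infty]$ is Borel with respect to the subspace topology of $A$; since by hypothesis~(4) the set $A$ is $\cS^\alpha$-measurable, the function $\cs^\alpha(\mu,\cdot)$ is then $\cS^\alpha$-measurable, which is exactly the measurability input required by Theorem~\ref{theo-mea area form}.

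Next I would check that the remaining abstract hypotheses of Theorem~\ref{theo-mea area form}, read with $\cS=\cF_b$, with $(\cF_b)_{\mu,\zeta_{b,\alpha}}$ playing the role of $\cS_{\mu,\zeta}$, and with $\psi_\zeta=\cS^\alpha$, all hold here. Condition (1) of Theorem~\ref{theo-mea area form} is hypothesis~(2); condition (3) is hypothesis~(3); condition (4) is hypothesis~(4); condition (5) is hypothesis~(5); condition (7) is hypothesis~(6). Condition (2) of Theorem~\ref{theo-mea area form} holds automatically, because every closed ball is a closed set, hence a Borel set, so every element of $\cS=\cF_b$ is Borel and every element of $(\cF_b)_{\mu,\zeta_{b,\alpha}}\subset\cF_b$ is closed.

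The one hypothesis that calls for an argument is condition~\eqref{ceta-estim-II} of Theorem~\ref{theo-mea area form}, that is, the existence of uniform enlargements lying inside $\cS$. Given $S=\B(x_0,r)\in(\cF_b)_{\mu,\zeta_{b,\alpha}}$, I would take $\tilde S=\B(x_0,(1+\tau)\diam(S))\in\cF_b$, just as in the proof of Theorem~\ref{th:area formula sph Borel}. If $T\in(\cF_b)_{\mu,\zeta_{b,\alpha}}$ meets $S$ and satisfies $\diam(T)\le\tau\diam(S)$, then for $w\in T\cap S$ and any $y\in T$ one gets $d(y,x_0)\le d(y,w)+d(w,x_0)\le\diam(T)+\diam(S)\le(1+\tau)\diam(S)$, so $T\subset\tilde S$, and hence $\hat S\subset\tilde S$, where $\hat S$ is the enlargement \eqref{d:tau-enlargement} formed inside $(\cF_b)_{\mu,\zeta_{b,\alpha}}$. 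Moreover $\diam(\tilde S)\le 2(1+\tau)\diam(S)$, so that $\zeta_{b,\alpha}(\tilde S)=c_\alpha\diam(\tilde S)^\alpha\le(2(1+\tau))^\alpha\,\zeta_{b,\alpha}(S)$; thus the conditions in \eqref{eq:StildeConditions} hold with $c=2(1+\tau)\ge 1$ and $\eta=(2(1+\tau))^\alpha$.

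With all hypotheses of Theorem~\ref{theo-mea area form} verified, its conclusion yields directly that $\cs^\alpha(\mu,\cdot)$ is $\cS^\alpha$-measurable, that every $\cS^\alpha$-measurable $B\subset A$ is $\mu$-measurable, and that $\mu(B)=\int_B\cs^\alpha(\mu,x)\,d\cS^\alpha(x)$, which is the asserted area formula. I do not expect a substantial obstacle here: the genuine analytic work is already contained in Theorem~\ref{th:spher Borel} (whose proof is where diametric regularity is really used, through Lemma~\ref{lemma sph} and Lemma~\ref{lem:coversx}) and in Theorem~\ref{theo-mea area form}. The only delicate point is the bookkeeping in the enlargement step: one must keep in mind that $\hat S$ is formed inside $(\cF_b)_{\mu,\zeta_{b,\alpha}}$ while the replacement $\tilde S$ is required to be an element of $\cS=\cF_b$, i.e.\ an actual closed ball, which is why one uses a ball of the appropriate radius rather than the topological closure of $\hat S$.
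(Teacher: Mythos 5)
Your proposal is correct and follows essentially the same route as the paper: invoke Theorem~\ref{th:spher Borel} together with the $\cS^\alpha$-measurability of $A$ to get $\cS^\alpha$-measurability of $\cs^\alpha(\mu,\cdot)$, verify condition \eqref{ceta-estim-II} by enclosing the enlargement $\hat S$ of a ball $S=\B(x_0,r)$ in the ball $\B(x_0,(1+\tau)\diam(S))$, and then apply Theorem~\ref{theo-mea area form}. The only difference is that you spell out the triangle-inequality estimate and the constants $c$, $\eta$ explicitly, whereas the paper refers back to the analogous step in the proof of Theorem~\ref{th:area formula sph Borel}.
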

\begin{proof}
In view of Theorem~\ref{th:spher Borel}, the density $\cs^\alpha(\mu,\cdot):A\to[0,+\infty]$ is Borel with respect to the subspace topology of $A$. Being $A$ an $\cS^\alpha$-measurable set, then $\cs^\alpha(\mu,\cdot)$ is $\cS^\alpha$-measurable.
Arguing as in the proof of Theorem~\ref{th:area formula sph Borel}, we 
observe that condition \eqref{ceta-estim-II} of Theorem~\ref{theo-mea area form} is satisfied, again using the notion of enlargement of \eqref{d:tau-enlargement}. 
We can apply Theorem~\ref{theo-mea area form}, that concludes the proof.
\end{proof}

\begin{Rem}\rm 
Let $\cF_b'$ be the family of balls introduced in Remark~\ref{rem Fb'}.
Then in the assumptions of either Theorem~\ref{th:area formula sph Borel}
or Theorem~\ref{area formula spher meas} we actually have $(\cF_b)_{\mu,\zeta_{b,\alpha}}=\cF_b'$. As already pointed out in Remark~\ref{rem Fb'}, it holds $\cF_b'=(\cF_b')_{\mu,\zeta_{b,\alpha}}\subset (\cF_b)_{\mu,\zeta_{b,\alpha}}$. Conversely, any set of $(\cF_b)_{\mu,\zeta_{b,\alpha}}$
must differ from a point, due to the absolute continuity of $\mu\res A$
with respect to $\cS^\alpha\res A$, hence the opposite inclusion holds.
Thus, in both Theorems~\ref{th:area formula sph Borel} and \ref{area formula spher meas} the assumption (3) can be replaced by the condition that
$\cF_b'$ covers $A$ finely.
\end{Rem}

\bibliography{References}

\begin{thebibliography}{10}

\bibitem{AmbKir2000Rect}
Luigi Ambrosio and Bernd Kirchheim.
\newblock Rectifiable sets in metric and {B}anach spaces.
\newblock {\em Math. Ann.}, 318(3):527--555, 2000.

\bibitem{Federer69}
Herbert Federer.
\newblock {\em Geometric Measure Theory}.
\newblock Springer, 1969.

\bibitem{FS82}
Gerald~B. Folland and Elias~M. Stein.
\newblock {\em Hardy Spaces on Homogeneous groups}.
\newblock Princeton University Press, 1982.

\bibitem{FSSC15}
B.~Franchi, R.~P. Serapioni, and F.~Serra~Cassano.
\newblock Area formula for centered {H}ausdorff measures in metric spaces.
\newblock {\em Nonlinear Anal.}, 126:218--233, 2015.

\bibitem{Gromov1996}
Mikhael Gromov.
\newblock Carnot-{C}arath\'eodory spaces seen from within.
\newblock In {\em Sub-{R}iemannian geometry}, volume 144 of {\em Progr. Math.},
  pages 79--323. Birkh\"auser, Basel, 1996.

\bibitem{JNGV20prArea}
Antoine Julia, Sebastiano~Nicolussi Golo, and Davide Vittone.
\newblock Area of intrinsic graphs and coarea formula in carnot groups.
\newblock arXiv:2004.02520, April 6, 2020.

\bibitem{Kir94}
Bernd Kirchheim.
\newblock Rectifiable metric spaces: local structure and regularity of the
  {H}ausdorff measure.
\newblock {\em Proc. Amer. Math. Soc.}, 121(1):113--123, 1994.

\bibitem{Mag30}
Valentino Magnani.
\newblock On a measure theoretic area formula.
\newblock {\em Proc. Roy. Soc. Edinburgh Sect. A}, 145:885--891, 2015.

\bibitem{Mag31}
Valentino Magnani.
\newblock A new differentiation, shape of the unit ball, and perimeter measure.
\newblock {\em Indiana Univ. Math. J.}, 66(1):183--204, 2017.

\bibitem{Mag17Dens}
Valentino Magnani.
\newblock Some remarks on densities in the {H}eisenberg group.
\newblock {\em Ann. Acad. Sci. Fenn. Math.}, 42(1):357--365, 2017.

\bibitem{Magnani2019Area}
Valentino Magnani.
\newblock Towards a theory of area in homogeneous groups.
\newblock {\em Calc. Var. Partial Differential Equations}, 58(3):58:91, 2019.

\bibitem{MST18}
Valentino Magnani, Eugene Stepanov, and Dario Trevisan.
\newblock A rough calculus approach to level sets in the {H}eisenberg group.
\newblock {\em J. Lond. Math. Soc. (2)}, 97(3):495--522, 2018.

\bibitem{Preiss87}
D.~Preiss.
\newblock Geometry of measures in {${\bf R}^n$}: distribution, rectifiability,
  and densities.
\newblock {\em Ann. of Math. (2)}, 125(3):537--643, 1987.

\bibitem{PreissTiser92}
D.~Preiss and J.~Ti{\v{s}}er.
\newblock On {B}esicovitch's {$\frac12$}-problem.
\newblock {\em J. London Math. Soc. (2)}, 45(2):279--287, 1992.

\end{thebibliography}
\bibliographystyle{plain}

\end{document}